\newcommand{\eps}{\varepsilon}
\newcommand{\prb}[1]{\mathbb{P}(#1)}
\newcommand{\prbb}[1]{\mathbb{P}\big(#1\big)}
\newcommand{\mean}[2][]{\mathbb{E}_{#1}(#2)}
\newcommand{\meanb}[2][]{\mathbb{E}_{#1}\big(#2\big)}
\newcommand{\meanx}[2][]{\mathbb{E}_{#1}\bigg(#2\bigg)}
\newcommand{\bfrac}[2]{\bigg(\frac{#1}{#2}\bigg)}
\newcommand{\hier}[3]{#1^{(#2)}_{#3}}
\newcommand{\ee}{\mathrm{e}}
\newcommand{\ie}{i.e.\ }
\newcommand{\eg}{e.g.\ }
\newcommand{\abs}[1]{\lvert#1\rvert}
\newcommand{\lemm}[1]{Lemma~\ref{#1}}
\newcommand{\thmm}[1]{Theorem~\ref{#1}}
\newtheorem{theorem}{Theorem}
\newtheorem{lemma}[theorem]{Lemma}
\theoremstyle{definition}
\newtheorem{definition}[theorem]{Definition}
\newtheorem*{remark}{Remark}
\title{Reaching consensus on a connected graph}
\author{John~Haslegrave}
\author{Mate~Puljiz}
\begin{document}

\begin{abstract}
We study a simple random process in which vertices of a connected graph reach consensus through pairwise interactions. We compute outcome probabilities, which do not depend on the graph structure, and consider the expected time until a consensus is reached. In some cases we are able to show that this is minimised by $K_n$. We prove an upper bound for the case $p=0$ and give a family of graphs which asymptotically achieve this bound. In order to obtain the mean of the waiting time we also study a gambler's ruin process with delays. We give the mean absorption time and prove that it monotonically increases with $p\in[0,1/2]$ for symmetric delays.

\textit{Keywords: stabilisation time; random walk; coupon collector; voter model.}

\textit{AMS: 60G40; 60G50; 60K35.}
\end{abstract}

\maketitle

\section{Introduction}
We consider the evolution of a system on a connected graph $G$ with $n$ vertices. Each vertex has a strategy taken from $\{1,\ldots, m\}$ (we will frequently write $[m]$ for this set). The starting strategies of the vertices are chosen independently and uniformly at random. At each time step an edge is chosen uniformly at random, and both vertices are updated to have the same strategy, which is the higher of the two with probability $p$ and the lower with probability $1-p$. This simple model covers a broad range of real-life scenarios where a consensus is reached via pairwise interactions among the individual agents, whether we are interested in modelling an infectious disease spread or the process by which a certain gene became prevalent in the human genome.

The model was inspired by the well-studied tournament games in the theory of genetic algorithms (see Rowe, Vose and Wright \cite{DiffCG, PreDiffCG} and Vose \cite{SGAVose}) and indeed, it is a generalisation of these as it is easily seen that they reduce to the complete graph instance of our problem. The idea is that the underlying connected graph allows for modelling a spatial aspect of the problem at hand. It therefore comes as a surprise when in Section \ref{Graphs} we prove that the probability that a certain strategy prevails does not depend on the network structure of the nodes. This is achieved by reducing the problem to the study of the two-strategy case by looking at contiguous partitions of the strategy set. Validity of these coarse grainings was previously checked only for the complete graph case in \cite{PreDiffCG}.

The model resembles the \textit{voter model}, introduced as a lattice model by Clifford and Sudbury \cite{CS73} and adapted to more general graphs by Donnelly and Welsh \cite{DW83}. This is a continuous-time process in which each vertex adopts the strategy of a randomly-chosen neighbour at rate 1. There are two principal differences in our model. First, each update is given by a randomly chosen edge, not a randomly-chosen vertex; the two are equivalent only in the special case of regular graphs. Secondly, the voter model does not distinguish between strategies, whereas our model accounts for the possibility that some strategies are more effective than others. Donnelly and Welsh consider how the underlying graph may be chosen to minimise or maximise the expected time to reach a consensus, and Hassin and Peleg \cite{HP01} consider the same problem for a related discrete-time process with synchronous updates, where every vertex simultaneously adopts the strategy of a randomly-chosen neighbour at each time step. This latter process has the same completion time as a system of $n$ coalescing random walks starting at the vertices of $G$, and Cooper, Els\"asser, Ono and Radzik \cite{CEOR} recently gave improved bounds for this coalescence time. Both the continuous-time and the synchronous discrete-time voter models have an expected $n$ vertices updating in every unit of time, so the bounds on these models must be multiplied by $n$ for a meaningful comparison with our model, where only one vertex updates at each time step.

The expected time to reach a consensus will, of course, depend on the graph structure and we are able to give explicit formula for this mean only for the case of the complete graph and two strategies. This is done in Section \ref{GR} by relating the problem to a version of the gambler's ruin process with delays. By elementary means we show that this expression is monotonic in parameter $p\in[0,1/2]$ in the case of symmetric delays, which translates to a monotonicity result for the expected decision time of our process on the complete graph. Computer simulations using the PRISM model-checking software \cite{PRISM} seem to indicate that this holds true more generally for any fixed connected graph but the proof remains elusive.

It seems natural to conjecture that for a fixed parameter $p$ the process over the complete graph, on average, reaches consensus most quickly. This is again supported by the computer simulations but we are only able to prove it in the class of regular graphs where each node is adjacent to the same number of neighbours and with the restriction to two strategies, see Section \ref{Graphs}. This proves a conjecture of Donnelly and Welsh \cite{DW83}. It is less clear which graph we might expect to be slowest to reach consensus, and in fact PRISM simulations suggest the answer depends on $p$. For the case $p=0$ we give in Section \ref{CC} good bounds for the expected time for some specific types of graph, and an upper bound on the time taken for any graph, together with a family of graphs which asymptotically attain this bound within a small error term. These results make use of some generalisations of the coupon collector's problem.

\section{Absorption probabilities and the optimality of the complete graph}\label{Graphs}

Since $G$ is connected, eventually the process will, with probability 1, reach a state where only one strategy remains. Write $S$ for the strategy that is left; we give the precise distribution of $S$ in terms of $n$, $m$ and $p$. Note that this distribution does not depend on the structure of $G$, only its order.

\begin{theorem}For any graph $G$ with $n$ vertices, if the initial state is chosen uniformly at random from $[m]^n$ then $\prb{S=l}=1/m$ if $p=1/2$, and
\[\prb{S=l}=\frac{(l-2lp+mp)^n-(l-1-2lp+2p+mp)^n}{(m-mp)^n-(mp)^n}\]
otherwise.
\end{theorem}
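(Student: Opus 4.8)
The plan is to collapse to two strategies, solve that case by a gambler's-ruin computation, and recombine; the graph-independence asserted in the statement will emerge from the two-strategy step. For each threshold $l\in[m]$ I would replace every vertex's strategy by which of the two intervals $\{1,\dots,l\}$, $\{l+1,\dots,m\}$ it lies in. This projection is a faithful Markov lumping: since each class is an interval, the maximum and the minimum of two strategies from the same class again lie in that class, so an update on an edge internal to one class does not change the projected configuration; and on an edge joining the two classes the two strategies are comparable across the split, so the update moves both endpoints into $\{1,\dots,l\}$ with probability $p$ and into $\{l+1,\dots,m\}$ with probability $1-p$ (interchange $p$ and $1-p$ for the reverse labelling of strategies). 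Hence the projected process is exactly the two-strategy process, and $\{S\le l\}$ is precisely the event that it is absorbed with every vertex in $\{1,\dots,l\}$. Since the initial state is uniform on $[m]^n$, the number of vertices starting in $\{1,\dots,l\}$ is $\mathrm{Bin}(n,l/m)$ with independent coordinates.

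Next I would compute the two-strategy absorption probability, which is where the graph drops out. Write $K_t$ for the number of vertices currently in $\{1,\dots,l\}$. When $0<K_t<n$, connectivity forces a crossing edge to exist, so at that step the chosen edge is internal to one class with some probability $q<1$ — this, and only this, depends on $G$ and on the current configuration — in which case $K$ is unchanged, and otherwise $K$ moves to $K\pm1$ with probabilities $p$ and $1-p$. Thus $\{S\le l\}=\{K\text{ reaches }n\text{ before }0\}$, and the hitting probability $h(k):=\prb{K\text{ reaches }n\mid K_0=k}$ satisfies, for $0<k<n$,
\[h(k)=q\,h(k)+(1-q)\big(p\,h(k+1)+(1-p)\,h(k-1)\big);\]
since $q<1$ the factor $1-q$ cancels, leaving the graph-free recurrence $h(k)=p\,h(k+1)+(1-p)\,h(k-1)$ with $h(0)=0$ and $h(n)=1$. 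Solving this (equivalently: the consensus time is almost surely finite, so optional stopping applies to the bounded martingale $\big((1-p)/p\big)^{K_t}$) gives, for $p\ne1/2$,
\[h(k)=\frac{(1-p)^k\,p^{\,n-k}-p^n}{(1-p)^n-p^n},\]
while for $p=1/2$, where $K_t$ is itself a martingale, $h(k)=k/n$.

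Finally I would average over the binomial initial count, $\prb{S\le l}=\meanb{h(K_0)}$ with $K_0\sim\mathrm{Bin}(n,l/m)$. Because the coordinates are independent,
\[\meanb{(1-p)^{K_0}p^{\,n-K_0}}=\Big(\tfrac{l}{m}(1-p)+\big(1-\tfrac{l}{m}\big)p\Big)^n=\Big(\tfrac{\,l+(m-2l)p\,}{m}\Big)^n,\]
so for $p\ne1/2$, $\ \prb{S\le l}=\dfrac{\big(l+(m-2l)p\big)^n-(mp)^n}{(m-mp)^n-(mp)^n}$, and then $\prb{S=l}=\prb{S\le l}-\prb{S\le l-1}$ is exactly the displayed formula. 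When $p=1/2$ the same averaging gives $\prb{S\le l}=l/m$, hence $\prb{S=l}=1/m$.

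The main obstacle — and the only place real care is needed — is the graph-independence. The tempting objection is that the evolution of $K_t$ genuinely involves the graph- and configuration-dependent holding probability $q$; the resolution is that $q$ enters the harmonic equation solely as the common factor $1-q$ on the two sides and therefore cancels, so that the up-to-down ratio seen by $K_t$ is pinned at $p:(1-p)$ for every connected $G$. Together with the observation that only interval partitions give genuine lumpings — a non-contiguous partition is not preserved by $\min$ and $\max$, which is exactly why the coarse-grainings must be contiguous — this forces the law of $S$ to depend on $G$ only through $n$. The rest, the gambler's-ruin solution and the binomial first moment, is routine.
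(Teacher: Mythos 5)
Your proposal is correct and follows essentially the same route as the paper's proof: coarse-grain by the interval partition $\{1,\dots,l\}$ versus $\{l+1,\dots,m\}$, note that the count of low-class vertices performs a lazy gambler's-ruin walk whose holding probability is the only graph-dependent quantity and drops out, solve the resulting two-state absorption problem, and average over the $\mathrm{Bin}(n,l/m)$ initial count. The only cosmetic differences are that the paper passes to the embedded non-lazy walk observed at significant-edge times and applies optional stopping to the martingale $\big((1-p)/p\big)^{a_r}$, whereas you cancel the common factor $1-q$ from the harmonic recurrence; both arguments use the same step-direction convention (the low-class count increases with probability $p$) and arrive at the identical formula.
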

\begin{proof}
Trivially if $p=1/2$ all strategies are equivalent, and each is equally likely to remain to the end, so we may assume $p\neq 1/2$. We first compute $\prb{S\leqslant l}$ by coarse-graining the strategies into those at most $l$ and those exceeding $l$; call these sets of strategies $A$ and $B$ respectively. This is a coarse graining in the sense that when we consider the vertices as playing strategies in $\{A,B\}$ nothing changes unless the edge chosen has one vertex with strategy $A$ and one with strategy $B$ (call this a ``significant edge''), in which case they will both adopt $B$ with probability $p$ and both adopt $A$ with probability $1-p$. Thus the coarse-grained process we obtain on strategies $\{A,B\}$ is exactly the same as the original process for $m=2$, save that the distribution of starting states is different. We will have $S\leqslant l$ if and only if the coarse-grained process reaches consensus with all vertices playing $A$.

Write $a_0$ for the number of vertices initially playing $A$, and let $a_r$ be the number playing $A$ after the $r$th time a significant edge is chosen. The evolution of $a_r$ is a random walk with absorbing states at $0$ and $n$, where $a_r=a_{r-1}+1$ with probability $p$ and $a_r=a_{r-1}-1$ with probability $1-p$, independent of which edges are chosen, and indeed independent of $G$. So the probability that $a_r$ reaches $n$ before $0$, \ie $\prb{S\leqslant l}$, does not depend on $G$, only on $n$, $m$ and $l$. Therefore $\prb{S=l}=\prb{S\leqslant l}-\prb{S\leqslant l-1}$ is also independent of $G$.

Note that, since $p\neq 1/2$, the sequence $\big(\frac{1-p}{p}\big)^{a_r}$ is a bounded martingale. Writing $T$ for the value of $r$ at which the random walk stops, $T$ is a stopping time with finite expectation and so, by the Optional Stopping Theorem (see \eg \cite{williams1991probability}, p.\ 100)
\[
\meanx{\bfrac{1-p}{p}^{a_T}\Bigm|a_0=a}=\bfrac{1-p}{p}^a\,.
\]
It follows that
\begin{align*}
\prbb{a_T=n\bigm|a_0=a}&=\frac{\big(\frac{1-p}{p}\big)^a-1}{\big(\frac{1-p}{p}\big)^n-1} \\
&=\frac{(1-p)^ap^{n-a}-p^n}{(1-p)^n-p^n}\,.
\end{align*}
Since $a_0$ is distributed as $\operatorname{Bin}(n,l/m)$, we have
\begin{align*}
\prb{S\leqslant l}&=\sum_{a=0}^n\binom{n}{a}\bfrac{l}{m}^a\bfrac{m-l}{m}^{n-a}\bfrac{(1-p)^ap^{n-a}-p^n}{(1-p)^n-p^n} \\
&=\frac{(l-2lp+mp)^n-(mp)^n}{(m-mp)^n-(mp)^n}\,,
\end{align*}
and so
\[
\prb{S=l}=\frac{(l-2lp+mp)^n-(l-1-2lp+2p+mp)^n}{(m-mp)^n-(mp)^n}\,,
\]
as required.
\end{proof}

The probability of a particular strategy remaining at the end does not depend on the structure of $G$, but the time taken until this point is reached will do. It is natural to conjecture that the graph which has the quickest expected time is $K_n$. We prove this for the special case where $G$ is known to be regular (that is, having all degrees equal) and $m=2$.

\begin{theorem}\label{complete}For $m=2$ and any values of $n$ and $p$, $K_n$ has the shortest expected time to completion of any $n$-vertex regular graph.
\end{theorem}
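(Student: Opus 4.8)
The plan is to reduce the question about the $m=2$ process on a regular graph $G$ to a question purely about a random walk whose increments are governed by the rate at which "significant edges" (monochromatic-mismatch edges) are selected. Recall from the proof of the previous theorem that, for $m=2$, the number $a_r$ of vertices playing the lower strategy after the $r$th significant step is a gambler's-ruin walk on $\{0,\dots,n\}$ with up-probability $p$, and that this walk is \emph{independent} of $G$. The completion time of the process is therefore $\sum_{r=1}^{T} G_r$, where $T$ is the absorption time of the walk and $G_r$ is the number of steps the process spends between the $(r-1)$st and $r$th significant step (a geometric waiting time). The key observation is that, conditioned on the whole trajectory $a_0,a_1,\dots,a_T$ of the walk, the waiting time $G_r$ has a geometric distribution whose success probability is the probability that a uniformly chosen edge is significant, given that the current number of lower-strategy vertices is $a_{r-1}$. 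So the expected completion time equals $\mean{T} $ averaged appropriately, but more precisely it equals $\mean{\sum_{r=1}^T \mathbb{E}(G_r\mid a_{r-1})}$, and the walk part is identical for every regular graph. Hence it suffices to show that, for each fixed value $a\in\{1,\dots,n-1\}$ of the current "colour count", the expected waiting time $\mean{G_r\mid a_{r-1}=a}$ is minimised (over $n$-vertex regular graphs) by $K_n$ — equivalently, that the probability a random edge is significant is maximised by $K_n$ for every possible partition of $V(G)$ into a set of size $a$ and a set of size $n-a$.

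The heart of the argument is thus the following edge-counting claim: if $G$ is $d$-regular on $n$ vertices and $A\subseteq V(G)$ with $\abs{A}=a$, then the number $e(A,A^c)$ of edges between $A$ and its complement, divided by the total number of edges $nd/2$, is at most $\frac{a(n-a)}{\binom n2}$, which is exactly the corresponding ratio in $K_n$. Rearranged, this says $e(A,A^c)/(nd/2)\le a(n-a)/\binom n2$. I would prove this by a simple double-counting / degree argument: summing degrees over $A$ gives $2e(A)+e(A,A^c)=da$, so $e(A,A^c)=da-2e(A)$; since $e(A)\ge \binom a2 \cdot$(nothing forced from below)… rather, one should use the \emph{upper} bound on $e(A,A^c)$ coming from $e(A,A^c)\le da$ and $e(A,A^c)\le d(n-a)$, hence $e(A,A^c)\le d\min(a,n-a)$, and compare $\frac{d\min(a,n-a)}{nd/2}=\frac{2\min(a,n-a)}{n}$ with $\frac{a(n-a)}{\binom n2}=\frac{2a(n-a)}{n(n-1)}$. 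A short case check confirms $\frac{2\min(a,n-a)}{n}\le \frac{2a(n-a)}{n(n-1)}$ fails in general, so the crude bound is not enough; instead one keeps $e(A,A^c)=da-2e(A)$ and uses that any graph on $a$ vertices that is a subgraph of a $d$-regular graph satisfies a lower bound on $e(A)$ forced by regularity — precisely, $\sum_{v\in A}\deg_A(v)=2e(A)$ and each $\deg_A(v)\ge d-(n-a)$, giving $e(A)\ge \tfrac12 a\max(0,d-n+a)$. Substituting yields $e(A,A^c)\le da-a\max(0,d-n+a)=a\min(d,n-a)$. Now dividing by $nd/2$ and comparing with the $K_n$ value $\frac{a(n-a)}{\binom n2}=\frac{2a(n-a)}{n(n-1)}$, one checks $\frac{2\min(d,n-a)}{nd}\le \frac{2(n-a)}{n(n-1)}$, i.e. $\min(d,n-a)(n-1)\le d(n-a)$; this holds because if $d\le n-a$ then the left side is $d(n-1)\le d(n-a)$ iff $n-1\le n-a$ iff $a\le 1$… so again care is needed at the boundary $a=1$, where in $K_n$ the ratio is $\frac{2(n-1)}{n(n-1)}=\frac2n$ and in any $d$-regular graph a single vertex has exactly $d$ significant edges out of $nd/2$, ratio $\frac{2d}{nd}=\frac2n$, so equality holds — consistent, and for $a\ge 2$ one has $\min(d,n-a)(n-1) \le d(n-a)$ since $d\le n-1$ forces $\min(d,n-a)\le n-a$ when $a\ge 1$ and $(n-1)\le\frac{d(n-a)}{\min(d,n-a)}$ after the case split on whether $d\lessgtr n-a$. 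I expect this elementary but slightly fiddly inequality, handled with a clean case distinction on $\operatorname{sign}(d-n+a)$, to be the only real computation.

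With the edge-count claim in hand the proof assembles quickly. For every fixed trajectory of the gambler's-ruin walk, the conditional expected waiting time at state $a$ is $\big(\text{(number of edges)}/e(A,A^c)\big) \ge \binom n2 / (a(n-a))$, with equality for $K_n$; summing the conditional expectations over $r=1,\dots,T$ and then taking expectation over the walk (which is the \emph{same} random object for all regular $G$, by the previous theorem) shows the total expected completion time of $G$ is at least that of $K_n$. One small point to check: the waiting time $G_r$ is geometric with the stated parameter \emph{conditionally on the current state}, and the sequence of states visited is independent of $G$; I would spell this out via the strong Markov property, noting that after each significant step the edge selections are fresh and uniform, so the gap until the next significant step depends on $G$ only through $e(A,A^c)$ for the current partition $A$. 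The main obstacle is not conceptual but purely the verification that $K_n$ simultaneously maximises the significant-edge probability for \emph{every} vertex-bipartition size $a$ among regular graphs — i.e., the inequality $e(A,A^c)\le a\min(d,n-a)$ together with the comparison to $K_n$; everything else is a routine averaging argument layered on top of the already-established graph-independence of the walk.
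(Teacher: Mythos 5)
Your overall reduction --- decompose the completion time as $\sum_r t_r$ where $t_r$ is the waiting time between consecutive significant steps, note that the sequence of counts $a_0,a_1,\dots$ is a gambler's-ruin walk independent of $G$, and compare conditional expected waiting times --- is exactly the paper's framework. But the step you identify as ``the heart of the argument'' is false, and your own computation at the end shows it. You want the pointwise claim that for \emph{every} $d$-regular $G$ and \emph{every} $A\subseteq V(G)$ with $\abs{A}=a$, the significant-edge fraction satisfies $e(A,A^c)/(nd/2)\le a(n-a)/\binom{n}{2}$. Take $G=C_4$ and $A$ a pair of non-adjacent vertices: then $e(A,A^c)=4$ equals the total number of edges, so the fraction is $1$, while the $K_4$ value is $2\cdot 2/6=2/3$. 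More generally any regular bipartite graph with $A$ one side violates the claim. This is consistent with your final inequality $\min(d,n-a)(n-1)\le d(n-a)$ reducing, in the case $d\le n-a$, to $n-1\le n-a$, which fails for all $a\ge 2$; the ``clean case distinction'' you defer to cannot be completed because the statement being proved is not true.

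The reason the theorem survives is that the relevant quantity is not the worst-case (or every-case) number of significant edges for a given count $a$, but its \emph{expectation} conditioned on the trajectory of counts: which $a$-subset the active set actually is depends on the dynamics, and configurations with many boundary edges are not reached with the same probability as in a uniform choice of $A$. The paper's proof handles this in two moves that are both absent from your proposal: (i) Jensen's inequality, $\mean{t_{i+1}}=\mean{nd/(2E_i)}\ge nd/(2\mean{E_i})$, to reduce to an upper bound on $\mean[G]{E_i}$ rather than a lower bound on $\mean{1/E_i}$ configuration by configuration; and (ii) an induction on $i$ showing $\mean[G]{E_i\mid a_0,\dots,a_i}\le d\,a_i(n-a_i)/(n-1)$, where the inductive step tracks how $E_i$ changes when a significant edge at a vertex with $e_j$ significant edges fires ($E_{i+1}=E_i+d-2e_j$, the vertex being chosen with probability proportional to $e_j$), and a Cauchy--Schwarz-type bound $\sum e_j^2\ge(\sum e_j)^2/a_i$ closes the recursion. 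Without replacing your false pointwise claim by this conditional-expectation argument (or something equivalent), the proof does not go through.
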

\begin{proof}
Let $a_i$ be the number of vertices with strategy 1 after $i$ significant edges have been chosen, and let $t_i$ be the time between choosing the $(i-1)$th and $i$th significant edges. We will show that, for any fixed sequence $(a_i)$, $\mean[G]{t_i\mid a_0,\ldots,a_i}\geqslant \mean[K_n]{t_i\mid a_0,\ldots,a_i}$; the result follows by averaging.

Let $G$ be $k$-regular and $a_0,\ldots,a_r$ be a fixed possible sequence (i.e. $a_r=0$ or $a_r=n$, $0<a_i<n$ for $i<r$ and $a_i-a_{i-1}=\pm 1$ for each $i>0$). Note that the probability of a given sequence depends only on the number of increments and decrements, and not on $G$. Let $E_i$ be the number of significant edges at time $\sum_{j\leqslant i}t_j$ (when there are $a_i$ vertices with strategy 1). Then $\mean{t_{i+1}\mid E_i}=kn/(2E_i)$, so $\mean{t_{i+1}}=\mean{\mean{t_{i+1}\mid E_i}}=\mean{kn/(2E_i)}\geqslant kn/(2\mean{E_i})$ by Jensen's inequality. Since $\mean[K_n]{t_{i+1}}=n(n-1)/(2a_i(n-a_i))$, it is sufficient to prove that $\mean[G]{E_i}\leqslant ka_i(n-a_i)/(n-1)$ for any $k$-regular graph $G$. We prove this by induction on $i$; it is true for $i=0$ since each edge has probability $a_0(n-a_0)/\binom{n}{2}$ of being significant, and there are $kn/2$ edges.

Suppose that the result holds for $i$ and assume that $a_{i+1}=a_i-1$ (the case $a_{i+1}=a_i+1$ is similar). Then write $v_1,\ldots, v_{a_i}$ for the vertices playing strategy 1 and $e_1,\ldots,e_{a_i}$ for the numbers of significant edges meeting them. The probability that the next significant edge to be sampled meets $v_j$ is $e_j/E_i$, and if it does then $e_j$ edges become non-significant and $k-e_j$ edges become significant, so $E_{i+1}=E_i+k-2e_j$. Since $E_i=\sum_{j=1}^{a_i} e_j$,
\begin{align*}
\mean{E_{i+1}\mid e_1,\ldots,e_{a_i}}&=\sum e_j+k-2\frac{\sum e_j^2}{\sum e_j} \\
&\leqslant (1-2/a_i)E_i+k \,.
\end{align*}
If $a_i=1$ then $a_{i+1}=E_{i+1}=0$. Otherwise $1-2/a_i\geqslant 0$ and so
\begin{align*}
\mean{E_{i+1}}&=\meanb{\mean{E_{i+1}\mid e_1,\ldots,e_{a_i}}} \\
&\leqslant (1-2/a_i)\mean{E_i}+k \\
&\leqslant (1-2/a_i)ka_i(n-a_i)/(n-1)+k \\
&=\frac{k}{n-1}((n-a_i)(a_i-2)+n-1) \\
&=k(a_i-1)(n-(a_i-1))/(n-1) \,,
\end{align*} 
as required.
\end{proof}

\thmm{complete} does not immediately give the same result for larger values of $m$. While we know, from the coarse-graining argument, that for \eg $m=3$ the expected time for either $\{1\}$ or $\{2,3\}$ to be eliminated and the expected time for either $\{1,2\}$ or $\{3\}$ to be eliminated are both minimised by $K_n$, it does not follow that the expectation of the maximum of these two times is also minimised by $K_n$.

Since for $p=1/2$ and $G$ regular, the model is equivalent to the voter model, \thmm{complete} shows as a special case that the complete graph minimises the time taken for the voter model among regular graphs, as conjectured by Donnelly and Welsh \cite{DW83}.

\section{Upper bounds on the time to completion}\label{CC}

In this section we consider which graphs give the longest expected time to completion. We only consider the special case $p=0$; the authors used the PRISM model-checking software \cite{PRISM} to analyse the expected times for general $p$ on a variety of graphs, and the results suggest that the answer is different for larger $p$. While it is natural to expect that if the complete graph is fastest, a sparse graph such as the path might be slowest, this is not the case. In fact, since the average time until a specific edge is sampled is equal to the number of edges in the graph, the slowest cases are graphs which are both sparse in parts (so that specific edges may be needed) and dense in parts (so that it takes a long time to sample the necessary edges). We prove a general bound on the expected time for the process with $p=0$ on any graph, but also consider some particular graphs. As well as the path, we consider three natural families of graphs with the property of being sparse in parts and dense in others. The \textit{sundew} consists of a clique (that is, a complete subgraph) with some pendant edges attached as evenly as possible; the \textit{lollipop} consists of a clique with a pendant path, and the \textit{jellyfish} is something of a hybrid between the two, consisting of a clique with several shorter pendant paths attached as evenly as possible. 
PRISM simulations suggest that for small $n$ a sundew is the slowest graph for $p$ close to 0, with a lollipop being slowest for $p$ close to $1/2$. Our theoretical results on the special case $p=0$ indicate that suitably-chosen jellyfish are slower if $n$ is sufficiently large, but that the sundew on $n$ vertices with a clique of size $n-r$ remains slower than the lollipop with the same parameters. In fact the sundew is a special case of the \textit{spider graph}, defined by Donnelly and Welsh \cite{DW83} to be a clique with pendant edges attached in any manner; likewise the lollipop may be regarded as a generalisation of their \textit{tennis-racquet graph}, which is a clique with a single pendant edge. 

In our analysis, we consider some variants of the coupon collector problem. The classical setting, in which we collect coupons which are independently equally likely to be any of $n$ types, and ask for the expected time until we have at least one of each type, is a folklore result (see \eg \cite{Feller}); the answer is $nH_n$, where $H_n$ is the $n$th harmonic number, so is $n\log n+O(n)$. The variant known as the double dixie cup problem asks for the time until $m$ copies of each coupon have been collected, for fixed $m$, and is rather harder. Newman and Shepp \cite{NS60} gave the expectation $n(\log n+(m-1)\log\log n+O(1))$, and Erd\H{o}s and R\'enyi \cite{ER61} studied the distribution in more detail. More recently, other aspects of the problem have been studied by Myers and Wilf \cite{MW06}. Here we consider a setting in which different types have different targets, which are themselves random variables; choosing geometric random variables gives a particularly simple relationship with the original coupon collector problem. We will also need an inequality satisfied by processes which are similar to the classical coupon collector except for allowing the possibility of receiving multiple coupons in a single time step, which we prove below. Throughout, we will use the term ``geometric random variable'' to mean a variable equivalent to the number of independent identical trials up to and including the first success, \ie the geometric distribution has support $\{1,2,\ldots\}$.

First we consider the relaxation of the classical problem in which multiple coupons may be received simultaneously. We write $h(n)$ for the function with $h(n)=H_n$ if $n\in\mathbb{N}_0$ which is linear in between these points; note that $h(n)$ is concave.

\begin{lemma}\label{multipass}Suppose we have a process in which the probability of receiving a coupon of type $i$ is $1/N$ at each time step for every $i\in[n]$, where $N\geq n$. Write $T$ for the time at which we first have at least one coupon of every type. Then $\mean{T}\leq NH_n$, with equality if and only if the probability of receiving more than one new coupon simultaneously at any point is 0.
\end{lemma}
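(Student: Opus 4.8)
The plan is to run a potential-function (supermartingale) argument. Write $S_t\subseteq[n]$ for the set of types held after $t$ steps, so that $S_0=\emptyset$, $(S_t)$ is nondecreasing, $N_t:=\abs{S_t}$, and $T=\min\{t:N_t=n\}$. Let $A_{t+1}$ be the (random, possibly many-element) set of types received at step $t+1$ and $\mathcal F_t$ the history up to time $t$; I read the hypothesis in the natural conditional sense, namely $\prb{i\in A_{t+1}\mid\mathcal F_t}=1/N$ for every $t$ and every $i\in[n]$. (This conditional reading is what is needed: a process with the correct one-step marginals but adversarial dependence on the past can have $\mean{T}>NH_n$.) The right potential is $\phi(k):=NH_{n-k}=N\big(\tfrac1{n-k}+\tfrac1{n-k-1}+\dots+1\big)$, the expected remaining time in the extremal (classical) case started from $k$ held types, which satisfies $\phi(n)=0$ and $\phi(0)=NH_n$.

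The key step is a one-step estimate. Let $D_{t+1}:=\abs{A_{t+1}\setminus S_t}$ be the number of genuinely new types arriving at step $t+1$. On $\{N_t=k<n\}$ we have $\mean{D_{t+1}\mid\mathcal F_t}=\sum_{i\notin S_t}\prb{i\in A_{t+1}\mid\mathcal F_t}=(n-k)/N$, while if $D_{t+1}=d$ then $\phi(k)-\phi(k+d)=N\sum_{j=n-k-d+1}^{n-k}\tfrac1j\ge\tfrac{Nd}{n-k}$, since each of these $d$ harmonic terms is at least $\tfrac1{n-k}$. Combining, $\mean{\phi(N_t)-\phi(N_{t+1})\mid\mathcal F_t}\ge\tfrac{N}{n-k}\mean{D_{t+1}\mid\mathcal F_t}=1$, which says exactly that $M_t:=t+\phi(N_t)$, stopped at $T$, is a nonnegative supermartingale. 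Since $\phi\ge0$ this gives $\mean{t\wedge T}\le\mean{M_{t\wedge T}}\le M_0=NH_n$ for every $t$, and letting $t\to\infty$ (monotone convergence) yields $\mean{T}\le NH_n$; in particular $T<\infty$ almost surely.

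For the equality statement I would argue as follows. From $M_{t\wedge T}=(t\wedge T)+\phi(N_{t\wedge T})$ and $0\le\phi(N_{t\wedge T})\le NH_n$ with $\phi(N_{t\wedge T})\to\phi(N_T)=0$ pointwise, dominated convergence gives $\mean{\phi(N_{t\wedge T})}\to0$, so $\mean{T}=\lim_t\mean{M_{t\wedge T}}$. As $\mean{M_{t\wedge T}}$ is non-increasing in $t$ and starts at $M_0$, equality $\mean{T}=NH_n$ holds iff $\mean{M_{t\wedge T}}=M_0$ for all $t$, i.e. iff $M_{t\wedge T}$ is a genuine martingale. The only inequality used above was $\phi(k)-\phi(k+d)\ge\tfrac{Nd}{n-k}$, and this is an equality for $d\in\{0,1\}$ and strict for $d\ge2$; hence $M_{t\wedge T}$ is a martingale precisely when, almost surely, $D_t\le1$ for every $t$, which is exactly the condition that the probability of ever receiving more than one new coupon simultaneously is $0$.

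I expect the main obstacle to be the equality direction: making the dominated-convergence step rigorous and verifying that, at each step before $T$, the one-step inequality is tight exactly when $D_{t+1}\le1$ almost surely (while after absorption every relevant increment vanishes automatically), together with pinning down the conditional meaning of the hypothesis, without which even the inequality $\mean{T}\le NH_n$ can fail.
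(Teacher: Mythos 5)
Your proof is correct, and it takes a genuinely different route from the paper's. The paper proceeds by induction on $n$: it stops the process at the first arrival time $T_1$, applies optional stopping to the martingale $\sum_t(A_t+(B_t-n)/N)$ to get $\mean{T_1}=N\mean{A_{T_1}}/n$, invokes the induction hypothesis for the residual process (which forces it to extend $H$ to the concave interpolant $h$ and use Jensen's inequality to absorb the randomness of $n-A_{T_1}$), and finishes with the concavity estimate $a/n+h(n-a)\le h(n)$ for $a\ge 1$. Your potential-function argument with $\phi(k)=NH_{n-k}$ compresses all of this into the single one-step drift inequality $\phi(k)-\phi(k+d)\ge Nd/(n-k)$, avoiding both the induction and the Jensen/interpolation step; the equality case also falls out more transparently, since that inequality is tight exactly when $d\le 1$, and your limiting argument ($\mean{M_{t\wedge T}}$ non-increasing, converging to $\mean{T}$ by monotone and dominated convergence) correctly converts per-step tightness into the stated iff. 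The two arguments are close relatives --- your $\phi$ is precisely the value function that the paper's induction computes --- but yours is self-contained and arguably cleaner. Your remark that the hypothesis must be read conditionally on the past is a genuine point rather than pedantry: with only unconditional marginals $1/N$ one can arrange $\mean{T}=\infty$ even for $n=1$, and the paper's own proof implicitly assumes the conditional version when it writes $\mean{A_t}=(n-B_t)/N$ with $B_t$ random.
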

\begin{proof}We prove this by induction on $n$; it is trivial for $n=0$. Write $A_t$ for the number of new types received at time $t$, and $B_t$ for the number of coupons received before time $t$. Then $\mean{A_t}=(n-B_t)/N$, so $\sum_t(A_t+(B_t-n)/N)$ is a martingale. Run the process until the first time one or more coupons are received; write $T_1$ for the time at which this occurs. $T_1$ is a stopping time and $\mean{T_1}\leq N<\infty$. The martingale has bounded variation, so the Optional Stopping Theorem (see \eg \cite{williams1991probability}, p.\ 100) applies and so 
\[
0=\meanx{\sum_{t=1}^{T_1}\bigg(A_t+\frac{B_t-n}{N}\bigg)}\,.
\]
Since $B_t=0$ for $t\leq T_1$ and $A_t=0$ for $t<T_1$, the above gives $\mean{T_1}=N\mean{A_{T_1}}/n$. Now we have 
\begin{align*}
\mean{T}&\leq\mean{T_1}+N\mean{h(n-A_{T_1})}\\
&=N(\mean{A_{T_1}}/n+\mean{h(n-A_{T_1})})\\
&\leq N(\mean{A_{T_1}}/n+h(n-\mean{A_{T_1}}))\,,
\end{align*}
by the induction hypothesis and Jensen's inequality. Since $h(n-1)+1/n=h(n)$, and $h'(x)>1/n$ for $x<n-1$, we also have $a/n+h(n-a)\leq h(n)$ if $a\geq 1$, with equality if and only if $a=1$. Setting $a=\mean{A_{T_1}}$, and noting that $A_{T_1}\geq 1$ by definition, we get $\mean{T}\leq Nh(n)$ with equality if and only if both $A_{T_1}\equiv 1$ and the process after time $T_1$ also satisfies the condition that the probability of receiving multiple new coupons simultaneously is 0 (note that the first condition implies equality in the Jensen's inequality step).
\end{proof}

We next consider a process in which we potentially require multiple coupons of each type, with each type having a target number given by a geometric random variable. In general these variables will not be independent, and so we first define the types of dependencies we permit.

\begin{definition}Let $(X_i)_{i\geq 1}$ be a sequence of independent Bernoulli random variables with parameter $q$. For each $j\in[n]$, let $(\hier{i}{j}{k})_{k\geq 1}$ be a sequence of positive integers such that $\hier{i}{j}{k}\neq \hier{i}{j}{l}$ whenever $k\neq l$. For each $j\in [n]$, let $Y_j=\min\{k:X_{\hier{i}{j}{k}}=1\}$. A \textit{system of connected geometrics} is a set of random variables $(Y_j)_{j\in[n]}$ produced by such a construction.\end{definition}

We now prove an upper bound on the expected time taken by a collecting process with targets given by such a system of variables.

\begin{lemma}\label{coupon}Consider a process where we receive a coupon of type $i$ at each time step with probability $1/N$ for each $i\in[n]$, where $N\geq n$. For each $i\in[n]$ we require $Y_i$ coupons of type $i$, where the $Y_i$ are a system of connected geometrics with parameter $q$. Then the expected time to completion is at most $q^{-1}NH_n$, with equality if both the $Y_i$ are independent and the probability of receiving two or more coupons at the same time is $0$.
\end{lemma}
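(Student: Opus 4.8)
The plan is to bound $\mean{\tau}$, where $\tau$ denotes the completion time, by first replacing the dependent targets $(Y_i)$ by independent ones at the level of tail probabilities, and then reducing the resulting problem to \lemm{multipass} by a random marking of the coupons.

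First I would condition on the entire sequence of coupon arrivals and write $n_i(t)$ for the number of type-$i$ coupons received up to time $t$, which is a function of the arrivals alone. Since the $(Y_i)$ are independent of the arrivals, $\prb{\tau>t}$, conditioned on the arrivals, equals $1-\prb{Y_i\leq n_i(t)\text{ for all }i}$. Each event $\{Y_i\leq n_i(t)\}$ is, by construction, an increasing function of the Bernoulli variables $(X_m)$ underlying the system of connected geometrics, so these events are positively associated (Harris's inequality), whence $\prb{Y_i\leq n_i(t)\text{ for all }i}\geq\prod_i\prb{Y_i\leq n_i(t)}=\prod_i\big(1-(1-q)^{n_i(t)}\big)$. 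Thus, conditionally on the arrivals, $\prb{\tau>t}\leq 1-\prod_i\big(1-(1-q)^{n_i(t)}\big)$, and the right-hand side is exactly the corresponding tail probability for the process with the same arrivals in which each type-$i$ coupon is independently declared ``effective'' with probability $q$ and one waits until every type has received an effective coupon. Summing over $t\geq0$ shows that $\mean{\tau}$ is at most the expected completion time of this marked process; and if the $(Y_i)$ are genuinely independent then the association inequality is an equality and the two expectations coincide.

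Next I would bound the marked process. In it, at each time step and conditionally on the past, an effective coupon of type $i$ is received with probability $q/N$, so the marked process satisfies the hypothesis of \lemm{multipass} with $N$ replaced by $N/q$, which is at least $n$ since $N\geq n$ and $q\leq1$. Hence its expected completion time is at most $(N/q)H_n=q^{-1}NH_n$, with equality when no two effective coupons are ever received simultaneously; this last condition holds in particular whenever no two coupons at all are received simultaneously. Combining the two steps gives $\mean{\tau}\leq q^{-1}NH_n$, and if in addition the $(Y_i)$ are independent and no two coupons are received simultaneously then both intermediate inequalities become equalities, so $\mean{\tau}=q^{-1}NH_n$.

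The step I expect to be the main obstacle is dealing with the dependence built into a system of connected geometrics. The tempting approach --- coupling our process with an independent-target version so that it finishes no later on every sample path --- cannot work, because sharing Bernoulli variables makes the $(Y_i)$ positively correlated, so with positive probability all the targets are simultaneously large and the dependent process finishes strictly later. The resolution is to argue at the level of the tail probabilities $\prb{\tau>t}$ rather than on sample paths: there the positive correlation of the (increasing) completion events works in our favour via Harris's inequality, and after that the reduction to \lemm{multipass} via marking is routine.
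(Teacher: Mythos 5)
Your proof is correct, and it handles the key difficulty --- the dependence among the $Y_i$ --- by a genuinely different route from the paper. The reduction of the independent case to \lemm{multipass} via random marking of coupons is essentially identical to the paper's ``lazy revelation'' of the $X_i$, so the two arguments agree there. Where they diverge is in showing that dependencies only help: the paper adds the shared variables one at a time, comparing process A (with a shared $X_i$) to process B (with that occurrence replaced by a fresh copy $X$) by conditioning on all other variables and swapping the values of $X_i$ and $X$ on the two equally likely discordant events, and then needs a separate limiting argument when infinitely many dependencies are present. You instead observe that each completion event $\{Y_i\leq n_i(t)\}$ is increasing in the underlying Bernoulli variables, apply Harris's inequality to get $\prbb{\bigcap_i\{Y_i\leq n_i(t)\}}\geq\prod_i\prb{Y_i\leq n_i(t)}$ conditionally on the arrivals, and sum the resulting tail bound. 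This is shorter, dispatches arbitrary (even infinite) dependency structures in one step with no limiting argument, and makes the mechanism transparent --- positive association of the targets can only speed up the joint completion --- at the mild cost of invoking Harris/FKG rather than staying fully elementary. Your closing remark that a sample-path coupling cannot work is exactly right and is implicitly why the paper, too, argues in expectation rather than pathwise. One small point worth making explicit if you write this up: each event $\{Y_i\leq n_i(t)\}$ depends on only finitely many of the $X_m$, so Harris's inequality for finite product measures suffices, and the iteration over $n$ events is legitimate because an intersection of increasing events is increasing.
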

\begin{proof}If the $Y_i$ are independent, \ie the sets $\{\hier{i}{j}{k}\mid k\geq 1\}$ and $\{\hier{i}{j'}{k}\mid k\geq 1\}$ are disjoint whenever $j\neq j'$, then the process takes the same time as a process where each type is received with probability $q/N$ and only one coupon of each type is required. To see this, consider the same process except that, instead of revealing the $X_i$ initially to determine the $Y_j$, every time we receive a coupon of type $j$ we reveal the next variable in the sequence $X_{\hier{i}{j}{k}}$, and if it is 0 we discard that coupon. In this process we keep a coupon of any given type at each time step with probability $q/N$, and finish when we have kept a coupon of every type. The expected time for this is at most $q^{-1}NH_n$ by \lemm{multipass}, with equality if no two coupons can be received simultaneously.

Next we show that adding dependencies between the $Y_i$ only decreases the expected time. Suppose that we have two processes: process A where $X_i$ occurs in two or more sequences, and process B where one occurrence of $X_i$ (say for coupon type $j$) is replaced by a new variable $X$ with the same distribution, but which is otherwise identical to process A. Fix the values of all variables except $X_i$ and $X$. We may assume that the two processes require different numbers of type-$j$ coupons, $k$ and $l$ with $l>k$ (and consequently that $X\neq X_i$), since otherwise they finish at the same time. Write $t_0$ for the time at which, if $X_i=0$, both processes will have collected enough coupons of all types other than possibly type $j$; write $t_1$ for the time at which this happens if $X_i=1$, and write $t$ for the time at which both processes will have received $l$ coupons of type $j$. 
If $X_i=0$ and $X=1$ then process A takes $\min\{0,t-t_0\}$ steps longer than process B, whereas if $X_i=1$ and $X=0$ then B takes $\min\{0,t-t_1\}$ steps longer than A. These two events have the same probability, and clearly $t_1\leq t_0$ since increasing $X_i$ cannot increase $Y_{j'}$ for any $j'\neq j$ (and changing $X$ does not affect $Y_{j'}$ at all). So the expected time for B is at least that for A.

Now, start from a system of independent geometrics and add dependencies one by one. The expected time decreases at each stage. If there are only finitely many dependencies to add, then the final expected time will be at most $q^{-1}NH_n$. If there are infinitely many, the expected times after finitely many dependencies have been added converge to that of the final process, since two processes which have the same values $\hier{i}{j}{k}$ for $k<K$ will have expected times which are close together, for $K$ sufficiently large.
\end{proof}

Next we consider the case where one type is less likely to occur than any other.

\begin{lemma}\label{pathends}Suppose we run two processes where at each time step we receive a single coupon with probability $n/N$, equally likely to be any of $n$ types. If we receive a coupon of type other than type 1, we keep it with probability $q$. In one process we also keep coupons of type 1 with probability $q$, but in the other we keep them with probability $q'$, where $q$ and $q'$ are fixed with $0<q'<q<1$. Then the expected times for the two processes differ by $o(N)$.
\end{lemma}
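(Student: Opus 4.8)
The plan is to separate, in each process, the time needed to collect the special type $1$ from the time needed to collect everything else. For the first process, \lemm{multipass} applied to all $n$ types (at each step a type-$i$ coupon is kept with probability $q/N$, and no two coupons are ever kept in the same step, so ``$N$''$\,=N/q\ge n$ and we are in the equality case) gives expected completion time $q^{-1}NH_n$. For the second process, write $T'$ for the first time at which coupons of all types $2,\dots,n$ have been kept, and $Z$ for the first time a type-$1$ coupon is kept. The dynamics on types $2,\dots,n$ are unaffected by the rule for type $1$ --- each such type is still kept with probability $q/N$ per step, never two at once --- so \lemm{multipass} applied to these $n-1$ types gives $\mean{T'}=q^{-1}NH_{n-1}$, while $Z$ is geometric with parameter $q'/N$. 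Since the completion time of the second process is $\max(T',Z)=T'+(Z-T')^+$, its expectation is $q^{-1}NH_{n-1}+\mean{(Z-T')^+}$, and so the two expected completion times differ by $\bigl|\mean{(Z-T')^+}-q^{-1}N/n\bigr|$. As $q^{-1}N/n=o(N)$, it remains to show $\mean{(Z-T')^+}=o(N)$. (Throughout, the asymptotics are as $n\to\infty$; this is in fact the only regime in which the statement can hold, since for bounded $n$ and $q\neq q'$ a direct computation shows the two means differ by $\Theta(N)$.)

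To estimate $\mean{(Z-T')^+}\le\mean{Z\,\mathds{1}[Z>T']}$ I would insert an intermediate threshold, for instance $M=q^{-1}N\sqrt{H_{n-1}}$, which satisfies $N/q'\ll M\ll q^{-1}NH_{n-1}$ --- this window is nonempty precisely because $q$ and $q'$ are fixed (so $q/q'$ is bounded) while $H_{n-1}\to\infty$ --- and bound $\mathds{1}[Z>T']\le\mathds{1}[Z>M]+\mathds{1}[T'\le M]$. For the first term, $Z$ is geometric with mean $N/q'$, so $\prb{Z>M}$ is exponentially small in $\sqrt{H_{n-1}}$ and, by the memoryless property, $\mean{Z\,\mathds{1}[Z>M]}\le(M+N/q')\prb{Z>M}=N\cdot o(1)$. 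For the second term, $\prb{T'\le M}$ is the probability that all $n-1$ of the types $2,\dots,n$ are collected by time $M$; since $M\ll q^{-1}NH_{n-1}$ the expected number of types not yet collected at time $M$ tends to infinity, and the events ``type $j$ not yet collected'' are pairwise negatively correlated (an elementary two-coupon estimate, comparing $(1-2q/N)^{M}$ with $(1-q/N)^{2M}$), so Chebyshev's inequality gives $\prb{T'\le M}=o(1)$; then Cauchy--Schwarz gives $\mean{Z\,\mathds{1}[T'\le M]}\le\mean{Z^2}^{1/2}\prb{T'\le M}^{1/2}=O(N)\cdot o(1)$, using $\mean{Z^2}=O(N^2)$. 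Summing the two contributions yields $\mean{(Z-T')^+}=o(N)$, as required.

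The one genuinely delicate point is that $Z$ and $T'$ are driven by the same coupon stream --- at most one coupon is kept per step, so they are not independent --- which is why one cannot simply multiply tail probabilities. Splitting the event $\{Z>T'\}$ at the threshold $M$ sidesteps this, since each of the two resulting bounds involves only the marginal law of $Z$ or of $T'$; the remaining inputs (a geometric tail bound and the coupon-collector lower-tail concentration) are routine. I therefore expect the crux to be fixing a legitimate window $N/q'\ll M\ll q^{-1}NH_{n-1}$ --- which is exactly where the hypotheses that $q,q'$ are fixed and $n$ is large enter --- together with the concentration estimate for $T'$.
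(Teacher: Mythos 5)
Your proof is correct, but it takes a genuinely different route from the paper's. The paper couples the two processes directly: the same coupon is offered to both at each step and the keep decisions are coupled so that the second process keeps a type-1 coupon only when the first does; under this coupling the completion times differ only on the event that type 1 is the last type completed in the second process, whose probability $\prod_{k=1}^{n-1}\frac{kq}{kq+q'}$ is computed exactly via memorylessness and tends to $0$, and the conditional overshoot is a fresh geometric of mean $N/q'$, giving $o(N)$ at once. You never couple the two processes; instead you pin down both means exactly up to the single term $\mean{(Z-T')^+}$ via the equality case of \lemm{multipass} and the decomposition $\max(T',Z)=T'+(Z-T')^+$, and then kill that term by a threshold argument at $M=q^{-1}N\sqrt{H_{n-1}}$, using the geometric tail of $Z$ above $M$ and lower-tail concentration of the $(n-1)$-type collector below $M$ (the negative-correlation/Chebyshev step is standard and correct, and your observation that $Z$ and $T'$ share the coupon stream is exactly why the split at $M$ is needed). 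Both arguments are valid, and both read the lemma, correctly, as an asymptotic statement in $n$. The paper's coupling is shorter and quantitatively sharper (the disagreement probability is roughly $n^{-q'/q}$, so its error is $O(Nn^{-q'/q})$, versus your $O\big(N\sqrt{\log n}\,\ee^{-(q'/q)\sqrt{\log n}}\big)$); your version has the virtue of exhibiting the second mean explicitly as $q^{-1}NH_{n-1}+\mean{(Z-T')^+}$, which makes transparent the closing assertion of the paper's proof that the second process takes time $N(q^{-1}H_n+o(1))$.
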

\begin{proof}Couple the two processes, so that the same type of coupon is received at each step and either both coupons are kept or a coupon of type 1 is kept in the first process only. The second process takes longer than the first only if the last type kept in the second process is type 1. This happens with probability $\prod_{k=1}^{n-1}\frac{kq}{kq+q'}$. Since
\[
\lim_{n\to\infty}\prod_{k=1}^{n-1}\frac{kq}{kq+q'}\leq\lim_{n\to\infty}\exp\bigg(-\sum_{k=1}^{n-1}\frac{q'}{kq+q'}\bigg)=0\,,
\]
with high probability the two processes take the same time. When they do not, the difference is the time taken to keep a coupon of type 1, which has expected value $Nq'^{-1}$. Consequently the difference in expected times is $o(N)$, as required, and so the time for the second process is $N(q^{-1}H_n+o(1))$.
\end{proof}

We can regard the second process as equivalent to one where instead of having the same chance of receiving each coupon, but a reduced chance of keeping one type, we have a reduced chance of receiving that type in the first place. Consequently, by \lemm{coupon}, if each type has a target given by a system of connected geometrics, with type 1 being received with probability $1/aN$ and other types being received with probability $1/N$ each, then the expected time is at most $N(q^{-1}H_n+o(1))$. Exactly the same argument applies when there are two types (or any constant number) which have the lower probability of being received.

We now return to our original process running on a connected graph $G$. We consider the case $p=0$, $m=2$, which we may think of as having vertices either active or inactive. Active vertices never change their status, while an inactive vertex becomes active when an edge to a neighbouring active vertex is sampled by the process. For $U\subseteq V(G)$, write $T_U$ for the time until all vertices in $U$ are active, setting $T_U=0$ if all vertices in $G$ are inactive at the starting point. In the following analysis, we sometimes consider the slightly different setting in which the starting state is chosen uniformly at random among states with at least one active vertex. The two expectations differ by a factor of $1-2^{-n}$ (since only one state of a possible $2^n$ is excluded), which is much smaller than the error terms in our estimates.  We are now ready to prove an upper bound on the time taken for this process to reach consensus.

\begin{theorem}\label{upperbound}For any connected graph $G$ with $n$ vertices, $\meanb{T_{V(G)}}<n^2\log n+n$.
\end{theorem}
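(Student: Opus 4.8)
The plan is to couple the process with a slower one in which activity spreads only along a fixed spanning tree, to recognise that slower process as a coupon collector of the type controlled by \lemm{coupon}, and then to finish with the elementary estimates $N:=\abs{E(G)}\le\binom n2$ and a bound on the harmonic numbers. The point is that the random initial state contributes a ``geometric'' factor with parameter $1/2$, which both produces the constant $1/q=2$ in \lemm{coupon} and, more importantly, halves the effective number of coupons to roughly $n/2$.

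Concretely, fix a spanning tree $\mathcal T$ of $G$ with a root $r_0$, and for each vertex $v$ designate an escape sequence $v=i^{(v)}_0,i^{(v)}_1,i^{(v)}_2,\dots$ of distinct vertices with consecutive terms adjacent in $G$, obtained by walking from $v$ up to $r_0$ in $\mathcal T$ and then continuing through the rest of $G$. Writing $X_u$ for the indicator that $u$ is active in the (uniformly random) starting state, put $Y_v=\min\{k\ge 0:X_{i^{(v)}_k}=1\}$; discarding the conditioning on at least one active vertex (which, as already observed, changes $\meanb{T_{V(G)}}$ by a factor $1-2^{-n}$, negligible here), the $X_u$ are independent Bernoulli$(1/2)$, so the $(Y_v)$ form a system of connected geometrics with $q=1/2$, and $Y_v=0$ precisely when $v$ is initially active. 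Since active vertices stay active, iterating ``sampling the edge from a vertex to its tree-parent, while the parent is active, activates that vertex'' along the escape sequence shows that $v$ becomes active once the edges $\{i^{(v)}_k,i^{(v)}_{k-1}\}$ have been sampled, in decreasing order of $k$, for $1\le k\le Y_v$. Regarding ``the tree-edge of $w$ is sampled'' as the arrival of a coupon of type $w$ — each type arriving with probability $1/N$ per step, with the caveat, permitted by \lemm{multipass} and \lemm{coupon}, that two types may share an edge and so arrive together — this dominates $T_{V(G)}$ by the completion time of a connected-geometrics collecting process in which the types of target $0$ (the initially active vertices) need not be collected.

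Applying \lemm{coupon} with $q=1/2$, treating the vertex $r_0$ and any other exceptional vertices via \lemm{pathends} and the remark following it, and keeping track of the fact that only the number $K$ of initially inactive vertices counts towards the harmonic number, gives $\meanb{T_{V(G)}}\le 2N\,\meanb{h(K)}$, with $h$ the concave interpolant of the harmonic numbers from \lemm{multipass}. Each vertex is initially inactive with probability $1/2$, so $\mean K\le n/2$, and by Jensen's inequality (using that $h$ is concave and increasing) together with the elementary bound $H_m\le\log(2m+1)$ we get $\meanb{h(K)}\le h(\mean K)\le h(n/2)\le H_{\lceil n/2\rceil}\le\log(n+2)$. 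Combined with $N\le\binom n2$ this yields
\[\meanb{T_{V(G)}}\le n(n-1)\log(n+2)\le n(n-1)(\log n+2/n)=n(n-1)\log n+2(n-1)<n^2\log n+n.\]

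The step I expect to be the real obstacle is making the coupling precise and reducing it rigorously to \lemm{coupon}: one must deal with vertices whose whole ascending path in $\mathcal T$ is initially inactive (these get activated ``from below''), confirm that the requirement that the edges be sampled ``in decreasing order of $k$'' is no stronger than the repeated-coupon requirement handled by \lemm{coupon}, and carry the bookkeeping through so that the initially active vertices drop out of the harmonic count — without this last point the bound would be of order $2\binom n2 H_n>n^2\log n+n$, so it is essential rather than cosmetic.
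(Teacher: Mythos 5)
Your skeleton is the paper's: build an escape sequence from each vertex, read off a system of connected geometrics with parameter $1/2$ from the random initial state, and feed it to \lemm{coupon} with $N=e(G)\le\binom{n}{2}$. But the step you yourself flag as the obstacle is a genuine gap, and your proposed identification does not close it. If ``a coupon of type $w$'' is the sampling of the \emph{fixed} tree-edge of $w$, then requiring $Y_v$ coupons of type $v$ means resampling one edge $Y_v$ times, which is neither what the process needs nor what your ordered-edge condition says; and the ordered collection of one coupon each of $Y_v$ \emph{different} types is not the setup of \lemm{coupon} at all. The paper's resolution is to make the coupon a moving target: a coupon of type $v$ arrives at a given step exactly when the sampled edge decreases the current distance from $v$ to the nearest active vertex (in your picture, when the next edge still needed on $v$'s escape path is sampled). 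While $v$ is inactive there is always at least one such edge, so type $v$ arrives with probability at least $1/e(G)$ at every step, exactly $Y_v$ arrivals suffice to activate $v$, and the heavy correlations between types are harmless because \lemm{multipass} uses only the marginal arrival probabilities. With that identification your coupling closes and no ordering issue remains.

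The second gap is the reduction from $H_n$ to $h(K)$, with $K$ the number of initially inactive vertices. \lemm{coupon} is stated for targets supported on $\{1,2,\dots\}$; your $Y_v$ take the value $0$, and you cannot recover the lemma by conditioning on which vertices start active, because conditioning on all the $X_u$ makes every target deterministic and the geometric structure disappears. So the inequality $\meanb{T_{V(G)}}\le 2N\,\meanb{h(K)}$ needs a new lemma, not just bookkeeping; the appeal to \lemm{pathends} is also out of place, since that lemma only gives $o(N)$ asymptotics and cannot support an inequality claimed for every $n$. The paper sidesteps all of this by indexing the escape sequence so that every target is at least $1$, landing on the bound $2e(G)H_n$. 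That said, your instinct that the halving is ``essential rather than cosmetic'' deserves credit: $2\binom{n}{2}H_n=n(n-1)H_n$ does exceed $n^2\log n+n$ for all large $n$ (and the closing inequality $n(n-1)(\log n+1)<n^2\log n+n$ in the paper's own proof already fails for $n\ge 4$), so the crude estimate alone does not deliver the stated constant, and a refinement of the kind you propose --- made rigorous --- is exactly what is needed to repair the final step.
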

\begin{proof}For each vertex $v$ in turn, define a sequence $\hier{u}{v}{i}$ such that $\hier{u}{v}{1}=v$, for every $i>1$ $\hier{u}{v}{i}$ is adjacent to some vertex $\hier{u}{v}{j}$ with $j<i$, and $\big\{\hier{u}{v}{i}\mid i\in[n]\big\}=V(G)$. Write $d_v$ for the minimum distance from $v$ to a different active vertex, artificially setting $d_v=n$ if there are no other active vertices in $V(G)$. $d(v,\hier{u}{v}{i})\leq i-1$, so $d_v$ is bounded by $\min\{i>1:\hier{u}{v}{i}\text{ active}\}-1$. These bounds (for each different $v$) form a system of connected geometrics $(Y_v)_{v\in V(G)}$. Suppose we run the process on $G$, recording at every time step whether the edge sampled reduces the distance from $v$ to the nearest active vertex. Then there is at least a probability of $1/e(G)$ of this happening at each time step while $v$ is inactive; once $v$ becomes active we can make false records with probability $1/e(G)$ and it will make no difference. Once the number of records for $v$ reaches $Y_v$, $v$ must be active. This process dominates a collecting process where each of $n$ types of coupon is received with probability exactly $1/e(G)$ and the targets form a system of connected geometrics with parameter $1/2$, which by \lemm{coupon} takes time at most $2e(G)H_n<n(n-1)(\log n+1)<n^2\log n+n$, as required.\end{proof}

This bound is close to best possible, as we will show by considering jellyfish graphs. Before analysing the process on sundews, lollipops and jellyfish in more detail we first prove some lemmas on the time until a subset of vertices of a particular type reaches its final state. 

\begin{lemma}\label{leaves}Write $L$ for the set of vertices of degree 1 in $G$, and suppose $\abs{L}=r$. Then as $r\to\infty$,
\[e(G)(h(r)-\log 2+o(1))\leq\mean{T_L}\]
and
\[\meanb{T_{V(G)}}\leq e(G)(h(r)-\log 2+o(1))+\meanb{T_{V(G)\setminus L}}\,.\]
\end{lemma}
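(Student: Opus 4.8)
The plan is to view the activation of the leaves as a coupon-collector process whose $r$ ``coupons'' are the edges incident to the leaves, and to derive both inequalities from one asymptotic estimate. The underlying fact is that, since each step samples one of the $e(G)$ edges uniformly and independently, the expected time to sample every edge in a prescribed set of $k$ distinct edges is exactly $e(G)H_k=e(G)\sum_{j=1}^k 1/j$. I will also use that the neighbour of a degree-$1$ vertex has degree at least $2$ (otherwise the two would form a $K_2$ component, impossible in a connected graph once $n\ge 3$), so the neighbour of a leaf lies in $V(G)\setminus L$; the same observation shows the $r$ edges incident to distinct leaves are distinct.

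For the lower bound, let $L'\subseteq L$ be the random set of leaves that are inactive at time $0$, so $\abs{L'}\sim\operatorname{Bin}(r,1/2)$ and $L'$ is independent of the sequence of sampled edges. A leaf in $L'$ cannot become active before its incident edge has been sampled, and those $\abs{L'}$ edges are distinct, so conditioning on $L'$ shows that $T_L$ stochastically dominates the time to sample all of them; hence $\mean{T_L\mid L'}\ge e(G)H_{\abs{L'}}$ off the all-inactive starting configuration (of probability $2^{-n}$, where the convention $T_L=0$ costs only $2^{-n}e(G)H_r=o(1)$), and so $\mean{T_L}\ge e(G)\mean{H_{\abs{L'}}}-o(1)$.

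For the upper bound, apply the strong Markov property at the stopping time $\tau:=T_{V(G)\setminus L}$, at which every non-leaf is active. Let $L''$ be the set of leaves still inactive at time $\tau$; then $L''\subseteq L'$ since active vertices never revert, and each $v\in L''$ has its (now active) neighbour outside $L$, so $v$ becomes active precisely when its incident edge is next sampled. Thus the time from $\tau$ to $T_{V(G)}$ is the time to sample all $\abs{L''}$ of these distinct edges, which given the state at $\tau$ has conditional expectation $e(G)H_{\abs{L''}}$; therefore $\mean{T_{V(G)}}=\mean{T_{V(G)\setminus L}}+e(G)\mean{H_{\abs{L''}}}\le\mean{T_{V(G)\setminus L}}+e(G)\mean{H_{\abs{L'}}}$, using $\abs{L''}\le\abs{L'}$ and that $H$ is increasing.

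Both displayed bounds now follow once we establish $\mean{H_{\abs{L'}}}=h(r)-\log 2+o(1)$ for $\abs{L'}\sim\operatorname{Bin}(r,1/2)$. This comes from $H_k=\log k+\gamma+O(1/k)$ for $k\ge 1$, a Chernoff bound confining $\abs{L'}$ to $[r/3,r]$ (the complement contributing $O(\log r)\,\ee^{-\Omega(r)}=o(1)$), and $\log(2\abs{L'}/r)=\log(1+O(r^{-1/2}))=o(1)$ on that range, which together give $\mean{H_{\abs{L'}}}=\log(r/2)+\gamma+o(1)=H_r-\log 2+o(1)=h(r)-\log 2+o(1)$. All the steps are routine; the only one needing slight care is this last estimate, in particular bounding the contribution of atypically small $\abs{L'}$ where $\log$ is ill-behaved, but the exponential tail of the binomial disposes of it. The one conceptual point is simply that, because about half the leaves start active, the relevant coupon-collector effectively runs over about $r/2$ rather than $r$ coupons, which is where the $-\log 2$ appears.
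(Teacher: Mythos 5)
Your argument is correct and takes essentially the same route as the paper: both proofs reduce the activation of the leaves to a coupon-collector process over the pendant edges of the initially inactive leaves $L_0$ (with the upper bound run after time $T_{V(G)\setminus L}$ and the lower bound from the necessity of sampling each such edge), and both then show $\mean{h(\abs{L_0})}=h(r)-\log 2+o(1)$ for $\abs{L_0}\sim\operatorname{Bin}(r,1/2)$, the paper via Jensen's inequality for the upper direction and Hoeffding for the lower. The only loose point is your final estimate, where restricting $\abs{L'}$ to $[r/3,r]$ does not by itself give $2\abs{L'}/r=1+O(r^{-1/2})$; you need either a narrower concentration window or, as in the paper, concavity of $h$ for the upper direction, but this is routine to repair.
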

\begin{proof}Write $L_0$ for the set of vertices in $L$ which start off inactive. 
For the upper bound, once all vertices outside $L$ are active (after time $T_{V(G)\setminus L}$) it is sufficient to sample each edge leading to $L_0$ once. 
The additional time taken for this to happen is $e(G)h(\abs{L_0})$ for a particular $L_0$, so 
$\meanb{T_{V(G)}-T_{V(G)\setminus L}}\leq e(G)\mean{h(\abs{L_0})}$. By Jensen's inequality this is at most $e(G)h(\mean{\abs{L_0}})=e(G)h(r/2)$. Since $h(r)=\log r+\gamma+o(1)$, where $\gamma$ is Euler's constant, $h(r/2)=h(r)-\log 2+o(1)$, and so
\[\meanb{T_{V(G)}}\leq e(G)(h(r)-\log 2+o(1))+\meanb{T_{V(G)\setminus L}}\,.\]

For the lower bound, note that each of the $\abs{L_0}$ edges meeting $L_0$ must be sampled for all vertices in $L$ to become active. Each edge has probability $1/e(G)$ to be sampled at each time step, and only one can be sampled at any time step, so this takes time $e(G)h(\abs{L_0})$, by \lemm{multipass}. Consequently $\mean{T_L}\geq e(G)\mean{h(\abs{L_0})}$. Fix $\eps>0$; with probability at least $1-\ee^{-2\eps^2r}$, by Hoeffding's inequality, $\abs{L_0}\leq(1/2-\eps)r$. So
\begin{align*}
\mean{h(\abs{L_0})}&>(1-\ee^{-2\eps^2r})h((1/2-\eps)r)\\
&=h(r)+\log(1/2-\eps)+o(1)\,.
\end{align*}
Given $\delta>0$ choose $\eps$ such that $\log(1/2-\eps)<-\log 2-\delta$; then for large $r$ we have $\mean{h(\abs{L_0})}>h(r)-\log 2-\delta$, so $\mean{T_L}\geq e(G)(h(r)-\log 2+o(1))$, as required.\end{proof}

\begin{lemma}\label{clique}Let $S$ be an $r$-clique in $G$. Then $\mean{T_S}=e(G)o(1)$ as $r\to\infty$.\end{lemma}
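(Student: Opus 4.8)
\emph{Overall strategy.} I would condition on the set $W_0$ of vertices active at time $0$ (each vertex is independently active with probability $1/2$, up to the harmless conditioning $W_0\neq\emptyset$ in the variant model), recalling that $T_S=0$ when $W_0=\emptyset$. Everything hinges on a dichotomy: either $W_0$ meets $S$ --- likely, but then $T_S$ is \emph{small} because $S$ is a clique with many significant internal edges --- or $W_0$ misses $S$, which has probability at most $2^{-r}$, so that a crude $O(e(G))$ bound for $T_S$ on that event already suffices.

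\emph{Step 1: $W_0\cap S\neq\emptyset$.} Since $p=0$, active vertices never deactivate, and whenever exactly $j$ of the $r$ vertices of $S$ are active with $1\le j\le r-1$ there are precisely $j(r-j)$ edges of the clique $S$ from an active to an inactive vertex. Each is sampled with probability $1/e(G)$ per step, and sampling any of them (or any edge into $S$ from an outside active vertex, which only helps) increases the count; so the time to pass from $j$ to $j+1$ active vertices of $S$ is stochastically at most a geometric variable of mean $e(G)/\big(j(r-j)\big)$. Summing from the initial count (which is at least $1$ here) up to $r-1$,
\[
\meanb{T_S\mid W_0\cap S\neq\emptyset}\ \le\ e(G)\sum_{j=1}^{r-1}\frac{1}{j(r-j)}\ =\ \frac{2e(G)H_{r-1}}{r}\ =\ e(G)\,o(1).
\]

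\emph{Step 2: $W_0\cap S=\emptyset$ but $W_0\neq\emptyset$.} Let $D=\min_{v\in S}d(v,W_0)$ be the distance from $S$ to the nearest active vertex, and $B_j=\{u:d(u,S)\le j-1\}$, so that $\{D\ge j\}$ is exactly the event that all of $B_j$ is inactive. Connectivity forces $|B_j|\ge r+j-1$ throughout the range $1\le j\le n-r$ of $D$, and since inactivity is independent across vertices and $B_j\supseteq S$ we get $\prb{D\ge j\mid W_0\cap S=\emptyset}=O(2^{1-j})$, hence $\meanb{D\mid W_0\cap S=\emptyset,\,W_0\neq\emptyset}=O(1)$. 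Given $D=j$, fix a shortest path $w_0w_1\cdots w_j$ with $w_0\in W_0$ and $w_j\in S$, and follow a token that starts at $w_0$ and advances from $w_i$ to $w_{i+1}$ the first time edge $w_iw_{i+1}$ is sampled: a one-line induction shows $w_i$ is active whenever the token sits at $w_i$, so $w_j\in S$ is active by the time the token arrives --- a time that is a sum of $j$ independent $\mathrm{Geom}(1/e(G))$ variables, of mean $je(G)$ --- and Step 1 then adds at most $2e(G)H_{r-1}/r$ to activate the rest of $S$. Averaging over $j$ gives $\meanb{T_S\mid W_0\cap S=\emptyset,\,W_0\neq\emptyset}\le e(G)\,\meanb{D\mid\cdot}+2e(G)H_{r-1}/r=O(e(G))$.

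\emph{Conclusion and the main obstacle.} Splitting on $W_0$ and using $\prb{W_0\cap S=\emptyset}\le 2^{-r}$,
\[
\mean{T_S}\ \le\ \frac{2e(G)H_{r-1}}{r}\ +\ 2^{-r}\cdot O(e(G))\ =\ e(G)\,o(1)\qquad (r\to\infty),
\]
uniformly over all graphs $G$ with an $r$-clique. The one real difficulty is Step 2: the trivial estimate $T_S\le T_{V(G)}<n^2\log n+n$ of \thmm{upperbound} is useless there, since $e(G)$ can be negligible against $n^2\log n$; the remedy is the geometric tail of $D$ combined with the single-path token coupling, which together pin $\meanb{T_S\mid W_0\cap S=\emptyset}$ to $O(e(G))$. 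The remaining estimates are routine.
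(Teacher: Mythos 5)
Your proposal is correct and follows essentially the same two-step decomposition as the paper: the event that $S$ starts entirely inactive has probability $2^{-r}$ and conditionally the distance to the nearest active vertex has a geometric tail, so reaching $S$ costs only $o(1)e(G)$ in expectation; and filling the clique from one active vertex costs $e(G)\sum_{j=1}^{r-1}\frac{1}{j(r-j)}=\frac{2e(G)H_{r-1}}{r}=e(G)o(1)$. Your Step 2 (the ball bound for the tail of $D$ and the token coupling along a shortest path) just spells out what the paper states more tersely, so there is nothing further to add.
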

\begin{proof}First we bound the expected time until some vertex of $S$ is active (assuming some vertex in $G$ was initially active). With probability $1-2^{-r}$ this time is 0; if not the distance, $d$, from $S$ to the nearest active vertex is bounded by a geometric variable with rate $1/2$. There are $d$ edges which, if sampled in turn, will lead to a vertex in $S$ being active, and the expected time to sample these edges in turn is $2e(G)$. So the overall expected time until a vertex in $S$ is active is at most $2^{1-r}e(G)=o(1)e(G)$.

Secondly we bound the time from any position with at least one active vertex in $S$ until all vertices in $S$ are active. It is sufficient to deal with the case where exactly one vertex is active, since additional active vertices can only reduce the expected time. If $k$ vertices in $S$ are active, there are at least $k(r-k)$ edges which, if sampled, will increase the number of active vertices to $k+1$. Thus the expected time until all vertices in $S$ are active is at most $e(G)\sum_{k=1}^{r-1}\frac{1}{k(r-k)}$. Since $\frac 1{k(r-k)}=\big(\frac 1r\big)\big(\frac 1k+\frac 1{r-k}\big)$, this bound equals $e(G)(2H_{r-1}/r)=e(G)o(1)$. Combining these two estimates, $\mean{T_S}=e(G)o(1)$, as required.
\end{proof}

\begin{lemma}\label{path}Suppose $G$ contains $r$ vertices of degree 1 or 2 arranged in a path, $P$. Then $\mean{T_P}=e(G)(h(r)-\log 4+o(1))$ as $r\to\infty$.\end{lemma}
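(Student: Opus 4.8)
The plan is to analyse the $p=0$, $m=2$ process restricted to $P$ by decomposing the random starting configuration into maximal runs of active and inactive vertices. Because every vertex of $P$ has degree at most $2$, no activation can ever enter the interior of $P$ from outside $G$: activation spreads along $P$ only from the initially-active vertices of $P$ and, possibly, inward from the two endpoints $v_1,v_r$ if they have degree $2$. I would treat the case $G=P$ first (so $e(G)=r-1$) and then note that for general $G$ the same analysis applies verbatim with $r-1$ replaced by $e(G)$, the only new feature being that the two ``end gaps'' (the inactive run containing $v_1$, and the one containing $v_r$) may also be filled from outside; since those runs have $O(1)$ expected length their total expected contribution is $O(e(G))=o\big(e(G)\log r\big)$ and is absorbed into the error term. (If $v_1$ is adjacent to $v_r$ one treats $P$ as a cycle, which fills in the same way from its internal seeds.)

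The core observation concerns a single \emph{middle gap}: a maximal run $v_a,\dots,v_b$ of inactive vertices with $v_{a-1},v_{b+1}$ active, of length $\ell=b-a+1$. It is filled once the two activation frontiers meet, which requires exactly $\ell$ frontier advances, and while it is unfilled there are always exactly two distinct ``frontier edges'', each sampled with probability $1/e(G)$ per step; hence its fill time is distributed as a sum of $\ell$ independent $\mathrm{Geom}\big(2/e(G)\big)$ variables. Distinct gaps use disjoint edge sets, so $T_P$ equals the maximum of these fill times (plus the negligible end-gap times). Under the random start the run lengths are, up to boundary effects, independent geometrics, so the number $n$ of non-empty middle gaps is concentrated about $r/4$ and their lengths form a system of connected geometrics of parameter $1/2$.

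For the upper bound I would regard each middle gap as a single coupon type received with probability $2/e(G)$ per step (``the current pair of frontier edges''), with target equal to its length, and apply \lemm{coupon} with $q=1/2$ and $N=e(G)/2$ (which is at least the number of gaps, and no two coupons can arrive in one step since only one edge is sampled and the gaps' edge sets are disjoint). This gives a bound $q^{-1}NH_n=e(G)H_n$; averaging, using concavity of $h$, the concentration of $n$ about $r/4$, and $H_{r/4}=h(r)-\log 4+o(1)$, yields $\meanb{T_P}\le e(G)\big(h(r)-\log 4+o(1)\big)$. Fitting the random number of gaps into the framework of \lemm{coupon} requires a little care, which one handles by revealing the configuration lazily exactly as in that lemma's proof.

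The matching lower bound is the main obstacle: the naive estimates are not tight (dominating each gap by a one-sided fill inflates the constant, while ``some gap has a large fill time'' only gives $e(G)(\log r+O(1))$ without the correct additive term). The key is that a middle gap's fill time, averaged over its geometric length, is \emph{exactly} exponential, because a geometric sum of i.i.d.\ exponentials is exponential; so working in continuous time (each edge ringing as an independent rate-$1/e(G)$ Poisson process, with discrete step count matching continuous time up to negligible fluctuations) each middle-gap fill time is distributed as $e(G)$ times an $\mathrm{Exp}(1)$ variable, and these are i.i.d.\ given their number $n$. Since the maximum of $n$ independent $\mathrm{Exp}(1)$ variables has mean $H_n$, conditioning on $n$ and then invoking the concentration of $n$ about $r/4$ gives $\meanb{T_P}\ge e(G)\big(H_{r/4}+o(1)\big)=e(G)\big(h(r)-\log 4+o(1)\big)$. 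The independence used here follows from the Poissonisation (or from the negative association of multinomial counts), and the one genuine computation is $\sum_{\ell\ge1}2^{-\ell}\,\prb{\Gamma(\ell,1)>x}=e^{-x/2}$, i.e.\ that $\Gamma\big(\mathrm{Geom}(1/2),1\big)$ is an $\mathrm{Exp}(1/2)$ variable.
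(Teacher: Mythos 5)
Your decomposition into maximal inactive runs along $P$, the identification of each interior run with a coupon type received with probability $2/e(G)$ per step and a $\operatorname{Geom}(1/2)$ target, the concentration of the number of such runs around $r/4$, and the resulting value $e(G)H_{r/4}=e(G)(h(r)-\log 4+o(1))$ all coincide with the paper's argument. Where you diverge is the lower bound: the paper gets it for free from the equality clause of \lemm{coupon} (independent geometric targets, never two coupons at once, hence expected time \emph{exactly} $q^{-1}NH_n$ --- which rests on the discrete observation that a $\operatorname{Geom}(1/2)$ target received at rate $2/e(G)$ is equivalent to a unit target received at rate $1/e(G)$), whereas you Poissonise and use the fact that an exponential sum with a geometric number of terms is again exponential, so the gap fill times become i.i.d.\ $e(G)\cdot\operatorname{Exp}(1)$ with max of mean $H_n$. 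These are the same identity in discrete and continuous clothing; your version is correct and self-contained, but it re-derives machinery the paper already has packaged in \lemm{coupon}.

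The one genuine gap is your treatment of the two end intervals. You dismiss them on the grounds that their expected fill time is $O(e(G))=o(e(G)\log r)$ and ``absorbed into the error term,'' but the lemma carries an additive error of only $e(G)\cdot o(1)$: the constant $-\log 4$ being established is itself a term of size $e(G)\cdot O(1)$, so an unexamined $O(e(G))$ slack swallows exactly the quantity you are trying to pin down. What is actually needed is that, with high probability, the slower single-frontier end intervals (received at rate $1/e(G)$ rather than $2/e(G)$) are \emph{not the last to be completed}, so that adjoining them to the maximum changes its expectation by only $o(e(G))$; quantitatively, the probability that such a type finishes last among $\sim r/4$ types is $\prod_{k}\frac{k}{k+1/2}\to 0$. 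This is precisely the content of \lemm{pathends}, which the paper proves for this purpose and invokes at this point, and which your proposal never uses. The estimate is easy, but it must be made; as stated, your bookkeeping for the end gaps does not support the claimed precision.
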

\begin{proof}Consider the intervals of inactive vertices along the path, starting from one end. Each one in turn has its initial length dominated by independent geometric variables with parameter $1/2$ (dominated by rather than equal to since the total length is capped). With high probability there are fewer than $r/4+r^{2/3}$ such intervals (and there are at most $r/2$). Each one, except possibly the first and last, has active vertices at both ends.

Run a collection process with targets given by $r/4+r^{2/3}$ independent geometrics, with each type having probability $2/e(G)$ of being received at every time step except for two which have probability $1/e(G)$. Couple it to the process on $G$ by ensuring that each interval corresponds to one of the targets (or is dominated by it) and that while an inactive interval still exists a coupon of the corresponding type is received exactly when an edge at one end of the interval is sampled. By \lemm{coupon} and \lemm{pathends}, this takes time at most $2e(G)(h(r/4)+o(1))/2=e(G)(h(r)-\log 4+o(1))$.

For the lower bound, note that with high probability the process is dominated by a collecting process with $r/4-r^{2/3}$ independent geometric targets for which each type has probability $2/e(G)$ of being received. This has expected time $2e(G)h(r/4-r^{2/3})/2=e(G)(h(r)-\log 4+o(1))$.
\end{proof}

As a consequence of \lemm{path}, the expected time taken by the process on $P_n$ is $n\log n-O(n)$. We are now ready to compare the sundew and lollipop. In fact our result applies to any spider graph, not just the sundew, but simulations suggest that the expected time is longer on the sundew than on other spider graphs.

\begin{theorem}Let $\mathrm{Sd}_{n,r}$ be the sundew with a clique of size $n-r$ and $r$ pendant edges; let $\mathrm{Lp}_{n,r}$ be the lollipop with a clique of size $n-r$ and a pendant path of length $r$. Then, provided both $r$ and $n-r$ tend to infinity, $\mathrm{Sd}_{n,r}$ has expected time $e(\mathrm{Sd}_{n,r})(h(r)-\log 2+o(1))$ whereas $\mathrm{Lp}_{n,r}$ has expected time $e(\mathrm{Lp}_{n,r})(h(r)-\log 4+o(1))$. In particular, since $e(\mathrm{Sd}_{n,r})=e(\mathrm{Lp}_{n,r})$, the sundew has the higher expected time if $n-r$ and $r$ are both large.
\end{theorem}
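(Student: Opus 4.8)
The plan is to partition the vertex set of each graph into its clique part and its pendant part, estimate the time to activate each piece separately using the three preceding lemmas, and then combine. Throughout write $G$ for whichever of $\mathrm{Sd}_{n,r}$, $\mathrm{Lp}_{n,r}$ is under discussion; in either case $e(G)=\binom{n-r}{2}+r$, so the two graphs have the same edge count and I will keep $e(G)$ as a symbol rather than evaluating it. Observe first that attaching pendant edges (for the sundew) or a pendant path (for the lollipop) deletes no edges, so the $n-r$ vertices meant to form the clique genuinely induce an $(n-r)$-clique $S$ in $G$; since $n-r\to\infty$, \lemm{clique} applies to $S$ and gives $\mean{T_S}=e(G)o(1)$.

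For the sundew, let $L$ be its set of degree-one vertices. For $n-r$ large every clique vertex has degree at least $2$, so $L$ consists of exactly the $r$ pendant leaves and $V(G)\setminus L=S$. Since $r\to\infty$, the upper bound of \lemm{leaves} together with $\mean{T_S}=e(G)o(1)$ gives
\[
\meanb{T_{V(G)}}\le e(G)\big(h(r)-\log 2+o(1)\big)+\mean{T_S}=e(G)\big(h(r)-\log 2+o(1)\big),
\]
while $L\subseteq V(G)$ gives $T_{V(G)}\ge T_L$, so the lower bound of \lemm{leaves} gives the matching $\meanb{T_{V(G)}}\ge\mean{T_L}\ge e(G)(h(r)-\log 2+o(1))$. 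Hence $\meanb{T_{V(G)}}=e(G)(h(r)-\log 2+o(1))$, as claimed.

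For the lollipop, let $P$ be the set of $r$ vertices of the pendant path, so $V(G)=S\cup P$ and every vertex of $P$ has degree $1$ or $2$ in $G$. Since $r\to\infty$, \lemm{path} gives $\mean{T_P}=e(G)(h(r)-\log 4+o(1))$. Because active vertices stay active (here $p=0$), all of $G$ is active precisely when $S$ and $P$ both are, so $T_{V(G)}=\max\{T_S,T_P\}$; combining $\mean{T_P}\le\meanb{T_{V(G)}}\le\mean{T_S}+\mean{T_P}$ with $\mean{T_S}=e(G)o(1)$ yields $\meanb{T_{V(G)}}=e(G)(h(r)-\log 4+o(1))$. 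The final comparison is then immediate: the two expressions have the same prefactor $e(G)$, and $h(r)-\log 2$ exceeds $h(r)-\log 4$ by the fixed positive constant $\log 2$, which dominates the $o(1)$ error terms once $r$ and $n-r$ are sufficiently large, so $\mathrm{Sd}_{n,r}$ then has the strictly larger expected completion time.

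I expect no genuine obstacle: the statement is essentially a bookkeeping assembly of \lemm{leaves}, \lemm{clique} and \lemm{path}. The only points needing care are confirming that the pendant parts really do meet the hypotheses of the relevant lemmas — that the sundew has exactly $r$ degree-one vertices once $n-r$ is large, and that every vertex of the lollipop's pendant path has degree at most $2$ — together with the elementary domination inequalities relating $T_{V(G)}$ to $T_S$, $T_L$ and $T_P$, and checking that the $o(1)$ terms are beaten by the constant gap $\log 2$.
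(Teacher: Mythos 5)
Your proposal is correct and follows essentially the same route as the paper: the sundew is handled by combining the two bounds of \lemm{leaves} with \lemm{clique}, and the lollipop by combining \lemm{path} with \lemm{clique}, the clique contribution being absorbed into the $o(1)$ term. The extra bookkeeping you include (verifying the degree hypotheses and the inequality $T_{V(G)}\leq T_S+T_P$) is implicit in the paper's shorter argument but does not change the approach.
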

\begin{proof}By \lemm{leaves}, the expected time for the sundew is at least $e(\mathrm{Sd}_{n,r})(h(r)-\log 2+o(1))$ and at most $e(\mathrm{Sd}_{n,r})(h(r)-\log 2+o(1))+\mean{T_S}$, where $S$ is the set of vertices in the clique. But $\mean{T_S}=o(e(\mathrm{Sd}_{n,r}))$ by \lemm{clique}, giving the required result.

Similarly, for the lollipop we have a lower bound of $e(\mathrm{Lp}_{n,r})(h(r)-\log 4+o(1))$ for the path to become all active, by \lemm{path}, and an upper bound of $e(\mathrm{Lp}_{n,r})(h(r)-\log 4+o(1))+\mean{T_S}$. Again $\mean{T_S}=o(e(\mathrm{Lp}_{n,r}))$ by \lemm{clique}, giving the required result.
\end{proof}

By choosing $r$ so that $h(r)/h(n)\to 1$ but $r/n\to 0$, \eg $r=n/\log n$, we construct two sequences of graphs with expected time $(1/2-o(1))n^2\log n$, about half the bound in \thmm{upperbound}. However, the jellyfish construction can do better, equalling the bound up to a factor of $1-o(1)$.

\begin{theorem}Let $J_n$ be the jellyfish graph consisting of a clique of size $n-2n/\log_2 n$, with $n/(\log_2 n)^2$ pendant paths of length $2\log_2 n$ each. Then the expected time of the process on $J_n$ is $(1-o(1))n^2\log n$.
\end{theorem}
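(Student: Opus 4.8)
The plan is to sandwich $\mean{T_{V(J_n)}}$ between $(1\pm o(1))n^2\log n$. The upper bound is immediate: \thmm{upperbound} applied to $J_n$ gives $\mean{T_{V(J_n)}}<n^2\log n+n=(1+o(1))n^2\log n$, so all the work is in a matching lower bound. First I would record that $J_n$ has $e(J_n)=\binom{n-2n/\log_2 n}{2}+2n/\log_2 n=(1/2-o(1))n^2$ edges (the clique has $(1-o(1))n$ vertices; the pendant paths add only $O(n/\log n)=o(n^2)$ edges), and that the number of pendant paths is $N:=n/(\log_2 n)^2$ with $\log N=(1-o(1))\log n$.

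For the lower bound I would concentrate on the $N$ degree-$1$ tips $u_1,\dots,u_N$ of the pendant paths. For each $k$ let $\tau_k$ be the number of initially inactive vertices met on the way from $u_k$ towards the clique before the first initially active vertex (so $\tau_k=0$ precisely when $u_k$ starts active). Since each pendant path has $2\log_2 n$ vertices, each active independently with probability $1/2$: (i) the expected number of entirely inactive paths is $N\cdot 2^{-2\log_2 n}=1/(n(\log_2 n)^2)\to 0$, so with high probability every $\tau_k<2\log_2 n$ and each tip's ``tail'' lies inside its own path; (ii) $M:=\#\{k:\tau_k\ge 1\}\sim\mathrm{Bin}(N,1/2)$, so with high probability $M\ge(1-o(1))N/2$; and (iii) conditionally on $\tau_k\ge 1$, the $\tau_k$ are independent and each is a $\mathrm{Geom}(1/2)$ variable truncated at $2\log_2 n$.

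The crucial observation is that $u_k$ cannot become active until the process has sampled each of the $\tau_k$ edges of its tail, in the order leading away from the active region; so $u_k$ becomes active exactly when path $k$ has scored $\tau_k$ ``successes'', where at each step path $k$ scores a success (the sampled edge being the current frontier edge of its tail) with probability $1/e(J_n)$, successes for distinct paths never coincide, and at most one edge is sampled per step. Thus the time for all tips to become active --- a lower bound for $T_{V(J_n)}$ --- is exactly the completion time of a collecting process with $M$ coupon types, type $k$ having target $\tau_k$, each received with probability $1/e(J_n)$ per step and never two together. The targets are independent and, up to the negligible truncation, geometric with parameter $1/2$, and no two coupons ever arrive simultaneously, so \lemm{coupon} applies with equality: this expected completion time is $2\,e(J_n)H_M$. (The truncation at $2\log_2 n$ shifts this only by a lower-order amount: repeating the first-moment estimate in the proof of \lemm{coupon}, the process completes on a timescale $\sim 2e(J_n)\log M$, whereas the truncation --- sitting at $2\log_2 n\ge\log_2 M$ --- would only bite at a timescale $\sim 4e(J_n)\log n$.) Averaging over $M$ and using that $H$ is increasing together with the concentration of $M$ gives $\mean{T_{V(J_n)}}\ge 2\,e(J_n)\,\mean{H_M}\ge 2(1/2-o(1))n^2\cdot(1-o(1))\log n=(1-o(1))n^2\log n$, matching the upper bound.

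I expect the main obstacle to be recognising, and then exploiting, that a pendant path of length $2\log_2 n$ behaves \emph{not} like a single leaf --- which, as for the sundew, would contribute only $e(J_n)H_N\sim\tfrac12 n^2\log n$ --- but like a leaf whose activation needs a geometrically long chain of samplings; that chain is precisely what supplies the extra factor $q^{-1}=2$. The remaining difficulties are routine bookkeeping: confirming that neither the randomness in $M$ nor the unavoidable truncation of the geometric tails costs more than a $1-o(1)$ factor. The parameters of $J_n$ are calibrated exactly for this --- the clique is large enough that $e(J_n)\sim\binom n2$, there are enough paths that $\log N\sim\log n$, and the paths are long enough ($2\log_2 n\ge\log_2 M$) that their geometric tails are not effectively truncated before the collecting process ends.
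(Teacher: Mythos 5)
Your proposal is correct and follows essentially the same route as the paper: the upper bound is quoted from \thmm{upperbound}, and the lower bound comes from tracking the geometric distances from the pendant-path tips to the nearest active vertex, using the high-probability absence of entirely inactive paths, the independence of these distances across disjoint paths, concentration of the number of inactive tips, and \lemm{coupon} in its equality case. You are in fact more explicit than the paper's write-up about the factor $q^{-1}=2$ contributed by the geometric targets, which is precisely what lifts the bound from the sundew's $\tfrac{1}{2}n^2\log n$ to the claimed $(1-o(1))n^2\log n$.
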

\begin{proof}The upper bound follows from \thmm{upperbound}. For the lower bound, we show that the expected time until the end of every path is active is at least this long. The expected number of paths which start off all inactive is $(n/(\log_2 n)^2)2^{-2\log_2 n}=1/n(\log_2 n)^2=o(1)$. So with high probability no such path exists. Consider only the paths which have an inactive vertex of degree 1 at the end, and suppose there are $k$ of these. Each of these $k$ ends has a distance to the nearest active vertex given by a geometric random variable, and these variables are independent since the paths are disjoint. Consider a process where for $i\in[k]$ we receive a coupon of type $i$ if the distance from the $i$th end to the nearest active vertex is reduced. By \lemm{coupon}, the expected time for this process is $e(G)h(k)$, since the variables are independent and at most one path contains any sampled edge. The overall expected time is therefore at least $(1-o(1))e(G)\mean{h(k)}$. Applying the Chernoff bound, with high probability $k\geq n/(4(\log_2 n)^2)$, and so the expected time is at least $(1-o(1))e(G)h(n/(4(\log_2 n)^2))$. The required lower bound follows since $h(n/(4(\log_2 n)^2))>\log n-4\log\log_2 n=(1-o(1))\log n$.
\end{proof}

\section{Gambler's ruin with delays}\label{GR}
In this section we consider a special case of our problem of reaching a consensus on the complete graph with $n$ vertices where $m=2$. Because of the symmetries of the complete graph, from the probabilistic point of view, it is easily seen that the evolution of this system is isomorphic to a random walk over the set of states $\{0,1,\dots,n\}$ with $0$ and $n$ being absorbing states. More precisely, given that $k$ vertices are active and the remaining $n-k$ are inactive, the probability of sampling a significant edge (see Section \ref{Graphs}) is $\gamma_k=\frac{2k(n-k)}{n(n-1)}$ and conditionally on choosing a significant edge the probability of activating yet another vertex is $1-p$, and with probability $p$ a previously active vertex is deactivated. We remark in passing that the probability to sample a significant edge is symmetric under swapping the strategies, $\gamma_{n-k}=\gamma_{k}$. Below, we recall some of the theory on random walks relevant to our problem.

Gambler's ruin (GR) is a classical problem in probability theory. Given fixed parameters $p\in[0,1/2]$ and $n\in\mathbb{Z}^+$, a Markov chain $(X_t)_{t\in\mathbb{N}_0}$ over the state space $\{0,1,\dots,n\}$ is defined as follows. The states $0$ and $n$ are set to be absorbing and the remaining transition probabilities for states $0<k<n$ are given by
\begin{gather*}
 p_{k,k-1}=\prb{X_{t+1}=k-1\mid X_t=k}=p,\\
 p_{k,k+1}=\prb{X_{t+1}=k+1\mid X_t=k}=1-p.
\end{gather*}
This Markov chain models the situation where a gambler enters a casino with $\pounds X_0$ in his pocket and plays a sequence of games in which his odds of winning are $p:1-p$ and each time he bets $\pounds 1$ on his win. This continues until he either hits his goal $\pounds n$, or until he bankrupts, whichever occurs first. The time of this happening is represented by the random variable $T=\min\{t\in\mathbb{N}_0 : X_t\in\{0,n\}\}$ and is usually called the absorption time, which is easily seen to be almost surely finite. 

There are a few interesting quantities to investigate in this setting: the probability of gambler's ruin and how it depends on the initial capital $\prb{X_T=0\mid X_0=k}$, the expected time $\mean{T\mid X_0=k,X_T=0}$ for this to happen, etc.
It turns out that for the classical GR many of these quantities can be explicitly computed. This is usually done by employing martingale theory (see e.g.\ Williams \cite{williams1991probability}), or, more elementary, by solving certain recurrence relations (as in \cite{Feller}).

In the present article we seek to analyse a more general problem of gambler's ruin with delays (DGR). Given $p$ and $n$ as before, and a sequence of parameters $(\gamma_1,\dots,\gamma_{n-1})\in(0,1]^{n-1}$ we define a new Markov chain $(X_t)_{t\in\mathbb{N}_0}$ over $\{0,1,\dots,n\}$ with $0$ and $n$ still being absorbing states and the following transition probabilities for $0< k < n$:
\begin{gather*}
 p_{k,k-1}=\prb{X_{t+1}=k-1\mid X_t=k}=p\gamma_k,\\
 p_{k,k+1}=\prb{X_{t+1}=k+1\mid X_t=k}=(1-p)\gamma_k,\\
 p_{k,k}=\prb{X_{t+1}=k\mid X_t=k}=1-\gamma_k.
\end{gather*}
This modifies the previous model by allowing a draw outcome of a game with probability $1-\gamma_k$ in which case our gambler's fortune is unchanged, and conditioned on winning or losing $\pounds 1$ the probabilities are the same as before. It seems artificial to allow the probability of the draw outcome to depend on the current fortune of the gambler but, for our purposes this is exactly what was needed, as the number of significant edges (and hence the probability to sample one) at any time depends only on the number of currently active vertices.

Note that setting $\gamma_0=0, \gamma_n=0$ the formulae above extend to $0\le k \le n$. In the special case $\gamma_1=\dots=\gamma_{n-1}=1$ we recover the classical GR.

There is a vast amount of literature dealing with gambler's ruin and its extensions. This ranges from classical textbooks on probability such as Feller's \cite{Feller} to recent papers generalising the original problem in various directions. Engel \cite{Engel}, Stirzaker \cite{Stirzaker, Stir2}, Bruss, Louchard and Turner \cite{NGR1}, and Swan and Bruss \cite{NGR2} all look at the problem of $N>2$ gamblers playing each other at random and compute probabilities of each player being ruined and various other associated quantities depending on the initial wealth distribution. Some authors refer to this as $N$-tower problem as the process can be visualised by $N$ towers of stacked coins where at each step a coin is taken from the top of a tower chosen at random and placed on another tower amongst the others chosen again at random. The game stops when one of the towers becomes empty. 

Other variations include two players (a casino and a gambler) with multiple currencies \cite{multiple} by Kmet and Petkov\v{s}ek. Lengyel in \cite{Tiesallowed} allows ties, and more generally Katriel in \cite{Katriel,Katriel2} studies absorption time for a game in which the pay-off is a random variable with range $[-\nu,+\infty)\cap \mathbb{Z}$ for a positive integer $\nu$. Common to all these is that they assume identically distributed increments, whereas we allow that these depend on the given state.

The most relevant to our present work are the following two papers. In Gut's paper \cite{gut2013gambler}, a particular instance of DGR when all the delays are the same is investigated. We recover all of his results (with slightly different notation) by setting $\gamma_1=\dots=\gamma_{n-1}=1-r$. El-Shehawey \cite{Shehawey} allows all the probabilities to win, lose or draw to depend on the player's current fortune. This is indeed a more general setting then ours but unfortunately only absorption (\ie ruin) probabilities are provided there and the expected waiting time until absorption is not considered.

We will now derive the formula for the expected time of absorption of DGR. As before, $T$ is the time of absorption. To simplify notation, for each $0\le k\le n$ we denote $\mean{T\mid X_0=k}$ by $E_k$. Note that the ratio $\frac{p_{k,k-1}}{p_{k,k+1}}=\frac{p}{1-p}\in[0,1]$ is fixed and we denote it by $\lambda$. In order to calculate the expected time of absorption, we need to solve the following recurrence relation
\begin{align}\label{eq:dgr}
\gamma_k E_k=1+\gamma_k(p E_{k-1}+(1-p)E_{k+1}),
\end{align}
for $0<k<n$, with the boundary conditions $E_0=0$, and $E_n=0$. Note that the associated homogeneous equation
\begin{align*}
E_k=p E_{k-1}+(1-p)E_{k+1}
\end{align*}
whose solutions yield the probabilities for the chain to be absorbed in $0$ or $n$, depending on which boundary conditions are imposed, is the same as in the case of classical GR. In other words, since the equation above does not depend on the lagging parameters $\gamma_k$, the probability that the gambler bankrupts before earning $\pounds n$ is the same for both DGR and GR.

It is not hard to see that for any $a,b\in\mathbb{R}$ the expression $a+b\lambda^k$ solves the homogeneous equation above and finding the solution is therefore just a matter of fitting the constants $a$ and $b$. In order to find all the solutions to \eqref{eq:dgr} it therefore suffices to find just one particular solution to it. One way to solve this is by assuming a series expansion $\sum_i a_i\lambda^i$ of the solution. After a somewhat tedious computation which we deliberately skip, one finally arrives at the solution
\begin{equation}\label{expectation}
E_k=\frac{1+\lambda}{1-\lambda}\Bigg(S_n\frac{1-\lambda^k}{1-\lambda^n}-\sum_{i=1}^{k-1}\frac{1}{\gamma_i}(1-\lambda^{k-i})\Bigg),
\text{ for }0\le k\le n,
\end{equation}
where
\begin{equation*}
S_n=\sum_{i=1}^{n-1}\frac{1}{\gamma_i}(1-\lambda^{n-i}).
\end{equation*}
The reader is invited to check that this indeed satisfies both the recurrence relation \eqref{eq:dgr} and the boundary conditions. Setting $\gamma_1=\dots=\gamma_{n-1}=1-r$ gives
\begin{gather*}
 S_n=\frac{1}{1-r}\bigg(n-\frac{1-\lambda^n}{1-\lambda}\bigg),\\
 E_k=\frac{1}{1-r}\cdot\frac{1+\lambda}{1-\lambda}\bigg(n\frac{1-\lambda^k}{1-\lambda^n}-k\bigg),
\end{gather*}
which coincides with the result in the aforementioned paper. To the best of our knowledge this is the first time that the explicit formula for the expected time of absorption for the gambler's ruin with delays appears in the literature.

Note that plugging in the values $\gamma_i=\frac{2i(n-i)}{n(n-1)}$ into \eqref{expectation} will give the explicit formula for the expected time of reaching a consensus on the complete graph assuming we start with $k$ supporters of the first (more persuasive if $p<1/2$) and $n-k$ of the second (weaker) option.
\begin{remark}
Note that all the formulae have a removable singularity at $1$ and hence are well defined by continuity at $\lambda=1$ which corresponds to $p=1/2$.
\end{remark}

\subsection{Monotonicity of the mean absorption time}

We now wish to show that as $p$ increases from $0$ to $1/2$ (or $\lambda$ from $0$ to $1$) the mean absorption time monotonically increases as well. We could try to prove that each $E_k$ is monotonic in $p$ but this clearly is not true even in the case with no delays. One can easily compute that, for example, $E_1$ when $n=3$ attains a global maximum at $\lambda=(-1 + \sqrt{3})/2$. For this reason we will be considering symmetric sums $E_k+E_{n-k}$.

Unfortunately, neither are these in general monotonic. It turns out, however, that for a fixed $0 < k < n$ the symmetric term $E_k+E_{n-k}$ is indeed increasing with $\lambda$, as long as we assume that the parameters $\gamma_i$ are symmetric, i.e.\ if $\gamma_i=\gamma_{n-i}$ for $0<i<n$. Note that for the application we have in mind this suffices, as the starting distribution of strategies over the graph is usually chosen in a way that makes it symmetric under swapping the strategies, and also the probability to sample a significant edge (which is interpreted as a delay parameter $\gamma_i$) only depends on the number of vertices currently playing one or the other strategy, and, as we noted before, is independent under swapping the two strategies.

We will first give the proof of this fact for the classical GR which immediately extends to the case where all the parameters $\gamma_i$ are the same. We will need the following lemma.
\begin{lemma}\label{lm:gr}
Let $\alpha>1$. The function
\begin{equation*}
f(\lambda)=\frac{1-\lambda}{1+\lambda}\cdot\frac{1+\lambda^\alpha}{1-\lambda^\alpha}
\end{equation*}
is a decreasing (non-negative) function of $\lambda$ on $[0,1]$.
\end{lemma}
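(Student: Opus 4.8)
The plan is to compute the logarithmic derivative of $f$ and reduce monotonicity to an elementary inequality for $\sinh$. Non-negativity needs almost no work: on $[0,1)$ both factors $\frac{1-\lambda}{1+\lambda}$ and $\frac{1+\lambda^\alpha}{1-\lambda^\alpha}$ are positive, and at $\lambda=1$ the function extends continuously with value $1/\alpha>0$ (a removable singularity, as noted in the Remark above), so the whole content of the lemma is that $f$ is decreasing.

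First I would substitute $\lambda=\ee^{-2s}$, which is a decreasing bijection from $[0,\infty]$ onto $[0,1]$. A one-line computation gives $\frac{1-\lambda}{1+\lambda}=\tanh s$ and $\frac{1+\lambda^\alpha}{1-\lambda^\alpha}=\coth(\alpha s)$, hence
\[
f=\tanh(s)\coth(\alpha s)=\frac{\tanh s}{\tanh(\alpha s)}\,.
\]
Because $\lambda\mapsto s$ reverses order, it suffices to prove that $g(s):=\tanh s/\tanh(\alpha s)$ is strictly increasing on $(0,\infty)$. Using $\frac{d}{ds}\log\tanh s=\frac{2}{\sinh 2s}$ one gets
\[
\frac{g'(s)}{g(s)}=\frac{2}{\sinh 2s}-\frac{2\alpha}{\sinh 2\alpha s}\,,
\]
so $g'>0$ on $(0,\infty)$ is equivalent to $\sinh(2\alpha s)>\alpha\sinh(2s)$ for every $s>0$. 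This holds because $\sinh$ is (strictly) convex on $[0,\infty)$ and vanishes at $0$, so $x\mapsto\sinh(x)/x$ is increasing and therefore $\sinh(\alpha x)\ge\alpha\sinh(x)$ for $\alpha\ge 1$, with strict inequality when $\alpha>1$ and $x>0$; taking $x=2s$ finishes the argument. Unwinding the substitution then shows $f$ decreases from $f(0)=1$ to $f(1)=1/\alpha$.

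I do not expect a serious obstacle here; the only mild subtlety is the behaviour at the endpoint $\lambda=1$, which is handled automatically by working in the variable $s$ (where the singularity disappears) or by citing the removable-singularity remark. If one wishes to avoid the hyperbolic substitution, the same reduction can be done directly: $f'/f\le 0$ is equivalent to $\alpha\lambda^{\alpha-1}(1-\lambda^2)\le 1-\lambda^{2\alpha}$, and setting $\phi(\lambda)=1-\lambda^{2\alpha}-\alpha\lambda^{\alpha-1}+\alpha\lambda^{\alpha+1}$ one checks $\phi(1)=0$ and
\[
\phi'(\lambda)=\alpha\lambda^{\alpha-2}\bigl((\alpha+1)\lambda^2-2\lambda^{\alpha+1}-(\alpha-1)\bigr)\,,
\]
whose bracket is negative on $(0,1)$ after one further differentiation. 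This nested-derivative bookkeeping is the only place the argument turns slightly fiddly, which is why the $\sinh$ route is preferable.
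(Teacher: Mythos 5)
Your proof is correct and takes essentially the same route as the paper's: both substitute $\lambda=\ee^{-ct}$ and reduce the monotonicity of $f$ to the hyperbolic inequality $\sinh(\alpha x)>\alpha\sinh(x)$ for $x>0$, $\alpha>1$. Your packaging via $f=\tanh(s)/\tanh(\alpha s)$ and the sign of the logarithmic derivative is a little more direct than the paper's ``$f'(0)=-2$ and $f'$ has no interior zeros'' argument, and your appeal to convexity of $\sinh$ (rather than mere monotonicity) is in fact the justification the key inequality genuinely requires.
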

\begin{proof}
The function is continuously differentiable on $(0,1)$ with $f'(0)=-2$, hence it suffices to show that $f'$ does not have zeros in $(0,1)$.

The zeros of $f'$, if existed, would have to satisfy the following equation
\begin{equation*}
\alpha\lambda^{\alpha-1}(1-\lambda^2)+\lambda^{2\alpha}-1=0,
\end{equation*}
or the equivalent one
\begin{equation*}
g(\lambda)=\alpha\bigg(\frac{1}{\lambda}-\lambda\bigg)+\bigg(\lambda^\alpha-\frac{1}{\lambda^\alpha}\bigg)=0.
\end{equation*}
Setting $\lambda=e^{-t}$ we get
\begin{equation*}
g(\lambda)=h(t)=2\alpha(\sinh(t) - \sinh (\alpha t))
\end{equation*}
and it suffices to show that $h(t)$ does not have zeros in $t\in(0,+\infty)$. But this is clear since $\sinh$ is increasing on $(0,+\infty)$ and hence $t<\alpha t$ implies $\sinh(t)-\sinh(\alpha t)<0$.
\end{proof}
\begin{theorem}
In the classical GR setting, the symmetrised expectation
\begin{align*}
E_k+E_{n-k}=n\cdot\frac{1+\lambda}{1-\lambda}\cdot \frac{(1-\lambda^k)(1-\lambda^{n-k})}{1-\lambda^n}
\end{align*}
is a (non-negative) increasing function of $\lambda$ on $[0,1]$ for each $0\le k \le n$.
\end{theorem}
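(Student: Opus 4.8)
The plan is to reduce the claimed monotonicity to \lemm{lm:gr} by means of the substitution $\lambda=e^{-2u}$, $u\in(0,\infty)$. First note that the displayed closed form for $E_k+E_{n-k}$ is immediate from \eqref{expectation} with all $\gamma_i=1$, together with the identity $(1-\lambda^k)(1-\lambda^{n-k})=1-\lambda^k-\lambda^{n-k}+\lambda^n$; it also makes plain that the expression is nonnegative, and that the cases $k\in\{0,n\}$ are trivial (the function is identically $0$), so I may assume $0<k<n$ and write $n\varphi(\lambda)$ for the symmetrised expectation, where $\varphi(\lambda)=\frac{1+\lambda}{1-\lambda}\cdot\frac{(1-\lambda^k)(1-\lambda^{n-k})}{1-\lambda^n}$.

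Substituting $\lambda=e^{-2u}$ makes $\lambda$ a decreasing function of $u$, so it suffices to show that $\Phi(u):=\varphi(e^{-2u})$ is non-increasing on $(0,\infty)$. Using $1-e^{-2mu}=2e^{-mu}\sinh(mu)$, all the exponential prefactors cancel and one gets
\[\Phi(u)=\coth(u)\cdot\frac{2\sinh(ku)\sinh((n-k)u)}{\sinh(nu)}\,.\]
The crucial manipulation is now to apply $\sinh(nu)=\sinh(ku)\cosh((n-k)u)+\cosh(ku)\sinh((n-k)u)$ and divide numerator and denominator of the second factor by $\sinh(ku)\sinh((n-k)u)$, which turns it into $2/(\coth(ku)+\coth((n-k)u))$. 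Hence
\[\frac1{\Phi(u)}=\frac12\left(\frac{\coth(ku)}{\coth u}+\frac{\coth((n-k)u)}{\coth u}\right)\,,\]
and it is enough to prove that each summand on the right is non-decreasing in $u$.

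This is exactly \lemm{lm:gr} in disguise: under the same substitution $\lambda=e^{-2u}$ the function $f$ of that lemma becomes $\tanh(u)\coth(\alpha u)=\coth(\alpha u)/\coth(u)$, and since $\lambda$ decreases as $u$ increases, the conclusion of \lemm{lm:gr} reads ``$u\mapsto\coth(\alpha u)/\coth(u)$ is increasing on $(0,\infty)$ for every real $\alpha>1$''. Applying this with $\alpha=k$ and with $\alpha=n-k$ (the excluded case $\alpha=1$ contributes the constant $1$, which is harmless) shows that $1/\Phi$ is non-decreasing, hence $\Phi$ is non-increasing, hence $\varphi$ is non-decreasing in $\lambda$ on $(0,1)$, and by continuity on all of $[0,1]$; moreover for $n\ge 3$ at least one of $k,n-k$ exceeds $1$, so \lemm{lm:gr} is strict and $\varphi$ is strictly increasing, while for $n=2$ it is constant.

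I expect the only real obstacle to be the second step: recognising that after the $\sinh$‑expansion the ratio $2\sinh(ku)\sinh((n-k)u)/\sinh(nu)$ collapses to $2/(\coth(ku)+\coth((n-k)u))$. This is what decouples the three exponents $k$, $n-k$ and $n$ and allows the single‑exponent \lemm{lm:gr} to be used one term at a time; everything else (the hyperbolic substitution, cancellation of prefactors, the addition formula, passing to the reciprocal, taking limits at the endpoints) is routine bookkeeping.
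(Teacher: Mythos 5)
Your proof is correct and is essentially the paper's own argument written in hyperbolic notation: the addition formula you apply to $\sinh(nu)$ is exactly the identity $1-\lambda^n=\tfrac12(1-\lambda^k)(1+\lambda^{n-k})+\tfrac12(1+\lambda^k)(1-\lambda^{n-k})$ that the paper uses, and both arguments then express the reciprocal of the symmetrised expectation as the average of two copies of the function from \lemm{lm:gr} with exponents $k$ and $n-k$. The substitution $\lambda=\ee^{-2u}$ and the coth reformulation are cosmetic; no new idea or gap is introduced.
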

\begin{proof}
Note,
\begin{equation*}
1-\lambda^{n}=\frac{1}{2}(1-\lambda^k)(1+\lambda^{n-k})+\frac{1}{2}(1+\lambda^k)(1-\lambda^{n-k}).
\end{equation*}
Hence,
\begin{equation*}
\frac{1}{E_k+E_{n-k}}=\frac{1}{2n}\bigg(\frac{1-\lambda}{1+\lambda}\cdot \frac{1+\lambda^{n-k}}{1-\lambda^{n-k}}+\frac{1-\lambda}{1+\lambda}\cdot \frac{1+\lambda^{k}}{1-\lambda^{k}}\bigg)
\end{equation*}
and applying Lemma \ref{lm:gr} twice yields the result.
\end{proof}

We would now like to prove the same result for the general symmetric DGR. The expression for the symmetric term is
\begin{multline}\label{eq:exp}
E_k+E_{n-k}=\frac{1+\lambda}{1-\lambda}\Bigg[\sum_{i=1}^{n-1}\frac{1}{\gamma_i}(1-\lambda^{n-i})\frac{2-\lambda^k-\lambda^{n-k}}{1-\lambda^n}\\
-\sum_{i=1}^{k-1}\frac{1}{\gamma_i}(1-\lambda^{k-i})-\sum_{i=1}^{n-k-1}\frac{1}{\gamma_i}(1-\lambda^{n-k-i})\Bigg].
\end{multline}
For a fixed $0<i<n$ letting $\gamma_i=\gamma_{n-i}$ tend to $0$ while keeping the rest of the parameters bounded away from zero, the terms containing $\frac{1}{\gamma_i}=\frac{1}{\gamma_{n-i}}$ will become dominant which means that the expression above increases with $\lambda$ if and only if each of those terms increases with $\lambda\in[0,1]$. It now remains to collect the like terms involving $\frac{1}{\gamma_i}=\frac{1}{\gamma_{n-i}}$, and to show that these are increasing with $\lambda$.

Let us fix $k$ and $i$. We may assume that $0<k,i\le n/2$ as it is assumed that $\gamma_i$'s are invariant under changing $i$ with $n-i$, and as the expression under consideration $E_k + E_{n-k}$ is also symmetric. The term multiplying $\frac{1}{\gamma_i}=\frac{1}{\gamma_{n-i}}$ in \eqref{eq:exp} is
\begin{equation}\label{eq:proof}
\frac{1+\lambda}{1-\lambda}\bigg[(2-\lambda^{n-i}-\lambda^{i})\frac{2-\lambda^k-\lambda^{n-k}}{1-\lambda^n}
-(1-\lambda^{k-i})-(1-\lambda^{n-k-i})\bigg]
\end{equation}
if $i<k$, and hence $n-i>n-k$; and
\begin{equation}\label{eq:double}
\frac{1+\lambda}{1-\lambda}\bigg[(2-\lambda^{n-i}-\lambda^{i})\frac{2-\lambda^k-\lambda^{n-k}}{1-\lambda^n}
-(1-\lambda^{n-k-i})-(1-\lambda^{i-k})\bigg]
\end{equation}
if $i>k$, and hence $n-i<n-k$. If $k=i$ both expressions are valid. Notice that swapping $i$ with $k$ transforms one into another and therefore it suffices to prove that the expression in \eqref{eq:proof} is an increasing function of $\lambda\in[0,1]$ for fixed $0<i\le k \le n/2$.
\begin{remark}
Note that in case $n$ is even and $i=n/2$, we have $i=n-i$ and also $k\le i$, so in order not to double-count, the expression we should be considering is not \eqref{eq:double} but rather
\begin{equation*}
\frac{1+\lambda}{1-\lambda}\bigg[(1-\lambda^{n/2})\frac{2-\lambda^k-\lambda^{n-k}}{1-\lambda^n}
-(1-\lambda^{n/2-k})\bigg]
\end{equation*}
which is exactly a half of \eqref{eq:double}. It therefore still suffices to show monotonicity of \eqref{eq:double}, or equivalently \eqref{eq:proof}.
\end{remark}

Let us denote by $G(\lambda)$ the expression inside the square brackets in \eqref{eq:proof}:
\begin{equation*}
G(\lambda) =(2-\lambda^{n-i}-\lambda^{i})\frac{2-\lambda^k-\lambda^{n-k}}{1-\lambda^n}
-(1-\lambda^{k-i})-(1-\lambda^{n-k-i}).
\end{equation*}
Then in order to show that $\frac{1+\lambda}{1-\lambda}\cdot G(\lambda)$ is increasing on $[0,1]$ it is sufficient to show that
\begin{equation*}
H(\lambda)=\frac{1+\lambda^i}{1-\lambda^i}\cdot G(\lambda)
\end{equation*}
is increasing and non-negative on the same domain, as by virtue of Lemma \ref{lm:gr} we know that
$\frac{1+\lambda}{1-\lambda}\cdot\frac{1-\lambda^i}{1+\lambda^i}$ is non-negative and increasing, and so will be the product of the two. We calculate,
\begin{align*}
G(\lambda)&=\frac{(2-\lambda^{n-i}-\lambda^{i})(2-\lambda^k-\lambda^{n-k})-(2-\lambda^{k-i}-\lambda^{n-k-i})(1-\lambda^n)}{1-\lambda^n}\\
&=\frac{2-2\lambda^k-2\lambda^{n-k}-2\lambda^{n-i}-2\lambda^i+\lambda^{k+i}+\lambda^{n-(k-i)}+\lambda^{k-i}+\lambda^{n-(k+i)}+2\lambda^n}{1-\lambda^n}\\
&=\frac{(1-\lambda^i)\big[2-2\lambda^{n-i}+\lambda^{k-i}+\lambda^{n-(k+i)}-\lambda^k-\lambda^{n-k}\big]}{1-\lambda^n}\\
&=\frac{(1-\lambda^i)\big[2(1-\lambda^{n-i})+(1-\lambda^i)(\lambda^{k-i}+\lambda^{n-(k+i)})\big]}{1-\lambda^n}
\end{align*}
and hence
\begin{align*}
H(\lambda)=\frac{1+\lambda^i}{1-\lambda^i}\cdot G(\lambda) =\frac{(1+\lambda^i)\big[2(1-\lambda^{n-i})+(1-\lambda^i)(\lambda^{k-i}+\lambda^{n-(k+i)})\big]}{1-\lambda^n}.
\end{align*}
After introducing a substitution $\lambda=e^{-2t}$,
\begin{align*}
F(t)&=\frac{1}{2}H(e^{-2t})=\frac{2\cosh(it)[\sinh((n-i)t) + \sinh(it)\cosh((n-2k)t)]}{\sinh(nt)}\\
&=\frac{2\cosh(it)\sinh((n-i)t) + \sinh(2it)\cosh((n-2k)t)}{\sinh(nt)}
\end{align*}
it suffices to show that $F$ is non-negative and decreasing on $[0,+\infty)$. Using addition formulae we can rearrange the numerator of the previous expression to read
\begin{align*}
&[2 \cosh(it)\sinh(nt)\cosh(it) - \sinh(nt)] + \sinh(nt)- 2\cosh(it)\sinh(it)\cosh(nt) \\
&\hspace{22em}+ \sinh(2it)\cosh((n-2k)t)\\
&=\sinh(nt)\cosh(2it) + \sinh(nt) - \sinh(2it)\cosh(nt)+ \sinh(2it)\cosh((n-2k)t)\\
&=\sinh((n-2i)t) + \sinh(2it)\cosh((n-2k)t) + \sinh(nt).
\end{align*}
Therefore
\begin{equation*}
F(t)=1+\frac{\sinh((n-2i)t) + \sinh(2it)\cosh((n-2k)t)}{\sinh(nt)}.
\end{equation*}
To make things cleaner, we introduce yet another substitution
\begin{align*}
Q(t)=F(t/n)-1&=\frac{\sinh((1-\alpha) t) + \sinh(\alpha t)\cosh((1-\beta)t)}{\sinh(t)}\\
&=\frac{2\sinh((1-\alpha) t) + \sinh((\alpha + \beta -1)t)+\sinh((1-\beta+\alpha)t)}{2\sinh(t)}
\end{align*}
where $\alpha=\frac{2i}{n}$, $\beta=\frac{2k}{n}$, and since $0<i\le k\le n/2$ we have
$0<\alpha\le \beta \le 1$. It is clear now from the formula that $F(t)\ge 1$ on $[0,\infty)$ and in particular $F$ is non-negative on the positive reals. It therefore remains to show that $Q$ is decreasing on $[0,\infty)$, or equivalently that $4\sinh^2(t)Q'(t)\le 0$ for $t\in [0,\infty)$.
We calculate
\begin{align*}
W(t)&=4\sinh^2(t)Q'(t)\\
&=2\sinh(t)\big[2(1-\alpha)\cosh((1-\alpha) t) + (\alpha + \beta -1)\cosh((\alpha + \beta -1)t)\\
&\quad+(1-\beta+\alpha)\cosh((1-\beta+\alpha)t)\big]-2\cosh(t)\big[2\sinh((1-\alpha) t)\\
&\quad+\sinh((\alpha + \beta -1)t)+\sinh((1-\beta+\alpha)t)\big]\\
&=2(1-\alpha)\big[\sinh((2-\alpha) t)+\sinh(\alpha t)\big]+(\alpha+\beta-1)\big[\sinh((\alpha+\beta)t)\\
&\quad+\sinh((2-\alpha-\beta)t)\big]+(1-\beta+\alpha)\big[\sinh((2-\beta+\alpha)t)+\sinh((\beta-\alpha)t)\big]\\
&\quad-2\big[\sinh((2-\alpha) t)-\sinh((\alpha) t)\big]-\big[\sinh((\alpha + \beta)t)-\sinh((2-\alpha - \beta)t)\big]\\
&\quad-\big[\sinh((2-\beta+\alpha)t)-\sinh((\beta-\alpha)t)\big]\\
&=-2\alpha\sinh((2-\alpha) t)+2(2-\alpha)\sinh(\alpha t)-(2-\alpha-\beta)\sinh((\alpha+\beta)t)\\
&\quad+(\alpha + \beta)\sinh((2-\alpha - \beta)t)-(\beta-\alpha)\sinh((2-\beta+\alpha)t)\\
&\quad+(2-\beta+\alpha)\sinh((\beta-\alpha)t)
\end{align*}
This last expression for $W(t)$ clearly evaluates to zero at $t=0$ and therefore it is enough to show that this is decreasing on $t\in[0,\infty)$, in other words it suffices to show $W'(t)\le 0$ for $t\ge 0$. We calculate again,
\begin{align*}
W'(t)&=2(2-\alpha)\alpha\big[\cosh(\alpha t)-\cosh((2-\alpha) t)\big]\\
&\quad+(\alpha + \beta)(2-\alpha - \beta)\big[\cosh((2-\alpha - \beta)t)-\cosh((\alpha + \beta)t)\big]\\
&\quad+(2-\beta+\alpha)(\beta-\alpha)\big[\cosh((\beta-\alpha)t)-\cosh((2-\beta+\alpha)t)\big]\\
&=4(2-\alpha)\alpha\sinh(t)\sinh((\alpha -1)t)\\
&\quad+2(\alpha + \beta)(2-\alpha - \beta)\sinh(t)\sinh((1-\alpha-\beta)t)\\
&\quad+2(2-\beta+\alpha)(\beta-\alpha)\sinh(t)\sinh((\beta-\alpha-1)t)\\
&=-\sinh(t)\big[4(2-\alpha)\alpha\sinh((1-\alpha)t)
+2(2-\beta+\alpha)(\beta-\alpha)\sinh((1-\beta+\alpha)t)\\
&\quad-2(\alpha + \beta)(2-\alpha - \beta)\sinh((1-\alpha-\beta)t)\big]
\end{align*}
In the case $\alpha+\beta > 1$ the claim easily follows as the minus sign in front of the third term can be used to change the argument of that $\sinh$ function to $(\alpha+\beta -1)t$. Recalling that $0<\alpha\le \beta\le 1$ it is easy to check that all the other constant factors appearing in the expression are non-negative.

In the case $\alpha+\beta \le 1$, the claim follows from the facts that $1-\alpha \ge 1-\alpha-\beta$, $1-\beta+\alpha \ge 1-\alpha-\beta$,
\[4(2-\alpha)\alpha+2(2-\beta+\alpha)(\beta-\alpha) \ge 2(\alpha + \beta)(2-\alpha - \beta),\]
and the following lemma.
\begin{lemma}
Let $a_1,a_2,a_3,b_1,b_2,b_3$ be non-negative real numbers such that $b_1 \ge b_3$, $b_2 \ge b_3$, and $a_1+a_2 \ge a_3$. Then for all $t\ge 0$
\begin{equation*}
a_1\sinh(b_1 t)+a_2\sinh(b_2 t)-a_3\sinh(b_3 t) \ge 0.
\end{equation*}
\end{lemma}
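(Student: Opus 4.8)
The plan is to exploit the monotonicity of $\sinh$ on $[0,\infty)$ and reduce the three-term inequality to the single scalar hypothesis $a_1+a_2\geqslant a_3$. Since $t\geqslant 0$ and $b_1\geqslant b_3\geqslant 0$, we have $b_1 t\geqslant b_3 t\geqslant 0$, and because $\sinh$ is (strictly) increasing on the reals this gives $\sinh(b_1 t)\geqslant\sinh(b_3 t)\geqslant 0$; the same argument with $b_2$ in place of $b_1$ gives $\sinh(b_2 t)\geqslant\sinh(b_3 t)\geqslant 0$. The non-negativity of $a_1$ and $a_2$ then lets us multiply through without reversing inequalities.

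Carrying this out, first I would note $a_1\sinh(b_1 t)\geqslant a_1\sinh(b_3 t)$ and $a_2\sinh(b_2 t)\geqslant a_2\sinh(b_3 t)$, so that
\[
a_1\sinh(b_1 t)+a_2\sinh(b_2 t)\geqslant (a_1+a_2)\sinh(b_3 t)\,.
\]
Next, since $\sinh(b_3 t)\geqslant 0$ (as $b_3 t\geqslant 0$) and $a_1+a_2\geqslant a_3$, the right-hand side is at least $a_3\sinh(b_3 t)$. Combining the two displayed bounds yields $a_1\sinh(b_1 t)+a_2\sinh(b_2 t)-a_3\sinh(b_3 t)\geqslant 0$ for every $t\geqslant 0$, which is exactly the claim.

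I do not expect any genuine obstacle here: the only points requiring a moment's care are that the sign conditions align correctly (monotonicity of $\sinh$ is applied only to arguments that are genuinely ordered and non-negative, and the coefficients $a_1,a_2$ by which we scale are non-negative), and that $\sinh(b_3 t)\geqslant 0$ is needed before invoking $a_1+a_2\geqslant a_3$. This lemma then supplies the last missing piece in the case $\alpha+\beta\leqslant 1$ of the monotonicity argument, where one applies it with $(a_1,a_2,a_3)=\big(4(2-\alpha)\alpha,\,2(2-\beta+\alpha)(\beta-\alpha),\,2(\alpha+\beta)(2-\alpha-\beta)\big)$ and $(b_1,b_2,b_3)=(1-\alpha,\,1-\beta+\alpha,\,1-\alpha-\beta)$, whose hypotheses were verified just before the lemma statement.
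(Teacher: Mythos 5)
Your argument is correct and is essentially identical to the paper's proof, which rewrites the left-hand side as $a_1[\sinh(b_1 t)-\sinh(b_3 t)]+a_2[\sinh(b_2 t)-\sinh(b_3 t)]+(a_1+a_2-a_3)\sinh(b_3 t)$ and observes that each term is non-negative by the monotonicity of $\sinh$. Your chained-inequality phrasing is just a reorganisation of the same decomposition, and your sign checks are all in order.
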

\begin{proof}
We rewrite the left hand side as
\begin{equation*}
a_1[\sinh(b_1 t) - \sinh (b_3 t)] + a_2[\sinh(b_2 t) - \sinh (b_3 t)] + (a_1+a_2-a_3)\sinh(b_3 t).
\end{equation*}
Since $\sinh$ is an increasing function, each of the terms above is non-negative.
\end{proof}

This completes the proof of the following theorem.
\begin{theorem}
For each $0\le k \le n$ the symmetric sum of the mean absorption times $\mean{T\mid X_0=k}+\mean{T\mid X_0=n-k}$ of gambler's ruin with symmetric delays is monotonically increasing with $p\in[0,1/2]$.
\end{theorem}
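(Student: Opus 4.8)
The plan is to work from the explicit formula \eqref{expectation} and show that $E_k+E_{n-k}$ is increasing in $\lambda=p/(1-p)\in[0,1]$, which is equivalent to the claimed monotonicity in $p$ since $\lambda$ increases with $p$ on $[0,1/2]$. Expanding \eqref{expectation}, one writes $E_k+E_{n-k}$ as a combination of the reciprocals $1/\gamma_i$, $1\le i\le n-1$, with coefficients that are explicit rational functions of $\lambda$ (collected in \eqref{eq:exp}). Since the weights $1/\gamma_i$ are positive, it suffices to prove that each such coefficient is increasing in $\lambda$ on $[0,1]$; this reduction is in fact tight, since letting $\gamma_i=\gamma_{n-i}\to 0$ while keeping the remaining $\gamma_j$ bounded away from $0$ makes the $1/\gamma_i$-term dominate, so monotonicity of $E_k+E_{n-k}$ for all symmetric delays forces monotonicity of each coefficient. (The limiting step is harmless as everything is a finite sum of functions continuous on the compact interval $[0,1]$.)

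Using $\gamma_i=\gamma_{n-i}$, one collects the $1/\gamma_i$ and $1/\gamma_{n-i}$ contributions into a single coefficient of the form $\frac{1+\lambda}{1-\lambda}\,G(\lambda)$ for an explicit $G$. Swapping the roles of $i$ and $k$ interchanges the two possible algebraic shapes \eqref{eq:proof} and \eqref{eq:double} of this coefficient, and the symmetry of $E_k+E_{n-k}$ under $k\leftrightarrow n-k$ together with $\gamma_i=\gamma_{n-i}$ allows one to assume without loss of generality that $0<i\le k\le n/2$ (the case $n$ even, $i=n/2$ being a harmless half-weight variant). Thus it is enough to show that the expression \eqref{eq:proof} is increasing on $[0,1]$.

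Next I would peel off a factor using \lemm{lm:gr}, writing
\[
\frac{1+\lambda}{1-\lambda}\,G(\lambda)=\Bigl(\frac{1+\lambda}{1-\lambda}\cdot\frac{1-\lambda^i}{1+\lambda^i}\Bigr)\Bigl(\frac{1+\lambda^i}{1-\lambda^i}\,G(\lambda)\Bigr);
\]
the first factor is non-negative and increasing on $[0,1]$ by \lemm{lm:gr} (with $\alpha=i$; it is the constant $1$ when $i=1$), so it remains to show that $H(\lambda):=\frac{1+\lambda^i}{1-\lambda^i}G(\lambda)$ is non-negative and increasing. A short simplification reveals that $G(\lambda)$ carries a factor $1-\lambda^i$, which cancels the denominator and leaves $H$ a simple rational function; substituting $\lambda=\ee^{-2t}$ and rearranging with hyperbolic addition formulae gives
\[
F(t):=\tfrac12 H(\ee^{-2t})=1+\frac{\sinh((n-2i)t)+\sinh(2it)\cosh((n-2k)t)}{\sinh(nt)},
\]
which is visibly $\ge 1$. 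Hence the statement reduces to showing that the correction term above is a decreasing function of $t$ on $[0,\infty)$.

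Rescaling $t$ and setting $\alpha=2i/n$, $\beta=2k/n$ with $0<\alpha\le\beta\le1$, the remaining task is that $Q(t)=\bigl(\sinh((1-\alpha)t)+\sinh(\alpha t)\cosh((1-\beta)t)\bigr)/\sinh t$ is decreasing on $[0,\infty)$. Differentiating and clearing $\sinh^2 t$ yields $W(t)=4\sinh^2(t)Q'(t)$, which vanishes at $t=0$, so it suffices to show $W'(t)\le 0$ for $t\ge 0$. Computing $W'$ and applying $\cosh a-\cosh b=2\sinh\tfrac{a+b}{2}\sinh\tfrac{a-b}{2}$, one factors out $\sinh t$ and is left with an inequality of the form $a_1\sinh(b_1 t)+a_2\sinh(b_2 t)-a_3\sinh(b_3 t)\ge 0$ with explicit non-negative constants $a_j,b_j$. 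I expect this final step to be the main obstacle: one splits into the cases $\alpha+\beta>1$ and $\alpha+\beta\le 1$, checks in each that $b_1,b_2\ge b_3\ge 0$ and $a_1+a_2\ge a_3$ — the essential inequality being $4(2-\alpha)\alpha+2(2-\beta+\alpha)(\beta-\alpha)\ge 2(\alpha+\beta)(2-\alpha-\beta)$ — and then concludes by writing $a_1(\sinh(b_1t)-\sinh(b_3t))+a_2(\sinh(b_2t)-\sinh(b_3t))+(a_1+a_2-a_3)\sinh(b_3t)\ge 0$, which holds since $\sinh$ is increasing. Together with the bookkeeping for the degenerate cases, this gives the theorem.
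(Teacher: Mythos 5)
Your proposal is correct and follows essentially the same route as the paper's own proof: the same reduction to the coefficients of $1/\gamma_i$ via the degenerating-delays argument, the same peeling off of the increasing factor via \lemm{lm:gr}, the same substitution $\lambda=\ee^{-2t}$ leading to $F(t)\ge 1$, and the same final step showing $W'(t)\le 0$ via the elementary $\sinh$ inequality with the case split on $\alpha+\beta\lessgtr 1$. No gaps to report.
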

In particular, we have proved the following result.
\begin{theorem}
For $m=2$ and $G=K_n$, if the initial state is chosen symmetrically with respect to swapping strategies (\eg uniformly at random), then the expected time until reaching consensus increases monotonically with $p\in[0,1/2]$.
\end{theorem}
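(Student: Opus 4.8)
The plan is to deduce this immediately from the monotonicity theorem for symmetric DGR, using the isomorphism recorded at the start of Section~\ref{GR}. For $m=2$ on $K_n$ the process is isomorphic to a DGR on $\{0,1,\dots,n\}$ whose state counts the vertices currently playing strategy~$1$, with delay parameters $\gamma_i=\frac{2i(n-i)}{n(n-1)}$. These lie in $(0,1]$ and satisfy $\gamma_i=\gamma_{n-i}$, so they form a valid \emph{symmetric} system of delays, and under the isomorphism each step of the original process corresponds to one DGR step (a draw being an insignificant-edge sample). Hence, conditional on the initial state having exactly $k$ vertices playing strategy~$1$, the expected number of steps until consensus is exactly the $E_k$ of~\eqref{expectation}, with $E_0=E_n=0$.

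Next I would average over the random start. Writing $K$ for the number of vertices initially playing strategy~$1$, a strategy-symmetric initial distribution means $\prb{K=k}=\prb{K=n-k}$ for every $k$; the uniform choice from $\{1,2\}^n$ gives $K\sim\operatorname{Bin}(n,1/2)$, which has this property. The expected time to consensus is then $\sum_{k=0}^n\prb{K=k}\,E_k$, and symmetry of the law of $K$ lets us rewrite this as $\tfrac12\sum_{k=0}^n\prb{K=k}\,(E_k+E_{n-k})$.

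Finally, the preceding theorem states that for fixed $n$, $k$ and fixed symmetric delays each symmetric sum $E_k+E_{n-k}$ is monotonically increasing in $p\in[0,1/2]$; since the weights $\prb{K=k}$ are non-negative and do not depend on $p$, the expected time to consensus is a non-negative combination of functions increasing on $[0,1/2]$, hence increasing there. I do not expect any real obstacle here: all the analytic difficulty is already contained in the DGR monotonicity theorem, and the only points needing a line of justification are that the $\gamma_i$ are genuinely symmetric and that a strategy-symmetric (in particular, uniform) start produces a $p$-independent, symmetric distribution for $K$.
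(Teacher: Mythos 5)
Your proposal is correct and follows essentially the same route as the paper: the paper obtains this theorem as an immediate corollary of the symmetric-DGR monotonicity result, via the same isomorphism with delays $\gamma_i=\frac{2i(n-i)}{n(n-1)}$ (noted there to satisfy $\gamma_i=\gamma_{n-i}$) and the same appeal to a strategy-symmetric initial distribution. Your explicit rewriting of the expected time as $\tfrac{1}{2}\sum_{k}\mathbb{P}(K=k)\,(E_k+E_{n-k})$ merely spells out the averaging step that the paper leaves implicit.
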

\section*{Acknowledgements}
The authors acknowledge support from the European Union through funding under FP7-ICT-2011-8 project HIERATIC (316705), and would like to thank the anonymous referee for their careful reading and helpful comments.

\end{document}